\pgfplotsset{compat=newest}
\definecolor{gray}{RGB}{192,192,192} 
\definecolor{black}{RGB}{0,0,0} 
\newtheorem{theorem}{Theorem}
\newtheorem{proposition}[theorem]{Proposition}
\newtheorem{corollary}[theorem]{Corollary}
\theoremstyle{definition}
\newtheorem{example}[theorem]{Example}
\newcommand\abs[1]{\left|#1\right|}
\newcommand\lop[1]{\operatorname{LOP}\left(#1\right)}
\newcommand\kt[2]{\operatorname{KT}\left(#1,#2\right)}
\begin{document}

\title[Linear Ordering and Rankability]{On the Linear Ordering Problem and the Rankability of Data}
\author[Thomas R. Cameron and Sebastian Charmot and Jonad Pulaj]{}
\subjclass{52B12, 90C09, 90C27, 90C35}
\keywords{Linear ordering problem, degree of linearity, Kendall tau distance, rankability}

\email{trc5475@psu.edu}
\email{secharmot@davidson.edu}
\email{jopulaj@davidson.edu}

\begin{abstract}
In 2019, Anderson et al. proposed the concept of rankability, which refers to a dataset's inherent ability to be meaningfully ranked. 
In this article, we give an expository review of the linear ordering problem (LOP) and then use it to analyze the rankability of data.
Specifically, the degree of linearity is used to quantify what percentage of the data aligns with an optimal ranking. 
In a sports context, this is analogous to the number of games that a ranking can correctly predict in hindsight. 
In fact, under the appropriate objective function, we show that the optimal rankings computed via the LOP maximize the hindsight accuracy of a ranking. 
Moreover, we develop a binary program to compute the maximal Kendall tau ranking distance between two optimal rankings, which can be used to measure the diversity among optimal rankings without having to enumerate all optima.
Finally, we provide several examples from the world of sports and college rankings to illustrate these concepts and demonstrate our results. 
\end{abstract}

\maketitle

\centerline{\scshape Thomas R. Cameron}
\medskip
{\footnotesize
 \centerline{Penn State Behrend}
   \centerline{1 Prischak Building}
   \centerline{Erie, PA, 16563, USA}
}%

\medskip

\centerline{\scshape Sebastian Charmot and Jonad Pulaj}
\medskip
{\footnotesize
 \centerline{Davidson College}
   \centerline{P.O. Box 7129}
   \centerline{Davidson, NC, 28035}
}

\bigskip

\section{Introduction}\label{sec:intro}
The linear ordering problem (LOP) is a classical optimization problem that was classified as NP-hard in 1979 by Garey and Johnson~\cite{Garey1979}.
The LOP has received considerable attention in the literature due to its many applications in archaeology,  economics, and ranking~\cite{Chiarni2004,Glover1974,Kondo2014,Leontiff1986,Marti2011}.
Much attention has also been given to the computational aspects of the LOP, which include heuristic, branch-and-bound, and relaxation methods~\cite{Chanas1996,Dantzig1991,Grotschel1984,Grotschel1985:2,Grotschel1985:1,Khachiyan1979,Marti2011,Reinelt1993}.

In many ranking applications, the data is often assumed to be totally rankable, i.e., observations are some noisy evaluations of an objective function that reflects the true unique ranking~\cite{Chau2020}.
In~\cite{Anderson2019},  the authors questioned this assumption by proposing the concept of rankability, which refers to a dataset's inherent ability to be meaningfully ranked. 
When viewed as a digraph, these authors consider an acyclic tournament digraph to be the ideal dataset, which has one unique ranking that perfectly aligns with all pairwise information in the dataset. 
Moreover, given a dataset that can be represented as a digraph with binary weights, they measure rankability by computing the number of edge changes (additions or deletions) that need to be made in order to obtain an acyclic tournament digraph, and the number of distinct acyclic tournament digraphs that can be obtained given that number of edge changes.
The number of edge changes is used as a \emph{distance to ideal}, and the number of distinct acyclic tournament digraphs that can be obtained is used as a \emph{measure of diversity}.
Note that these values can be computed as a LOP, but the latter value requires the enumeration of all optimal solutions, which is not practical for most applications. 
Also, this rankability model is limited to data that can be represented as a digraph with binary weights. 

In this article, we give an expository review of the LOP and then use it to analyze the rankability of data in a manner that is capable of handling weighted data and does not require the enumeration of all optimal solutions. 
Specifically, the degree of linearity is used to quantify what percentage of the data aligns with an optimal ranking, which can be used as the distance to ideal.
In the context of sports, this is analogous to the number of games that a ranking can correctly predict in hindsight. 
In fact, under the appropriate objective function, we show that the optimal rankings computed via the LOP maximize the hindsight accuracy of a ranking. 
Moreover, we develop a binary program to compute the maximal Kendall tau ranking distance between two optimal rankings, which provides a measure of diversity without having to enumerate all optima.
Finally, we provide several examples from the world of sports and college rankings to illustrate these concepts and demonstrate our results. 
\section{The Linear Ordering Problem}\label{sec:lop}
Let $[n]=\{1,2,\ldots,n\}$, where $n\geq 2$, and $a_{ij}\in\mathbb{R}_{\geq 0}$ for $i,j\in [n]$.
Then, the LOP determines a \emph{symmetric re-ordering} of the matrix $A=[a_{ij}]$, i.e., a similarity transformation of $A$ via a permutation matrix, that results in a maximal upper-triangular sum. 
Equivalently, if $A$ is the adjacency matrix of a simple digraph $\Gamma$, then the LOP seeks to find a spanning acyclic tournament sub-digraph of $\Gamma$ with maximal weight sum, where we use the convention that an edge has weight zero if and only if the edge does not exist.
The integer program formulation of this problem, denoted $\lop{A}$, is described as follows~\cite{Marti2011}:
\begin{maxi!}
	{}{\sum_{i\neq j\colon i,j\in [n]}a_{ij}x_{ij}}{}{}\label{eq:lop-obj}
	\addConstraint{x_{ij}+x_{ji}}{=1,\quad\forall i<j\colon i,j\in [n]}\label{eq:lop-const1}
	\addConstraint{x_{ij}+x_{jk}+x_{ki}}{\leq 2,\quad\forall i<j,i<k,j\neq k\colon i,j,k\in [n]}\label{eq:lop-const2}
	\addConstraint{x_{ij}}{\in\{0,1\},\quad\forall i\neq j\colon i,j\in [n],}\label{eq:lop-const3}
\end{maxi!}
where the $x_{ij}$ are decision variables that indicate whether the edge $(i,j)$, with weight $a_{ij}$, should be included in the sub-digraph of $\Gamma$ with maximal weight sum.
Note that constraint~\eqref{eq:lop-const1} ensures that the sub-digraph is a spanning tournament and constraint~\eqref{eq:lop-const2} forces the tournament to contain no dicycles.
Formal definitions of tournaments and dicycles can be found in~\cite{Grotschel1985:1}.

A binary vector $X=[x_{ij}]\in\{0,1\}^{n(n-1)}$, where $x_{ij}$ satisfy constraints~\eqref{eq:lop-const1}--\eqref{eq:lop-const3}, is a \emph{feasible solution} of $\lop{A}$.
We can use constraint~\eqref{eq:lop-const1} to project the feasible solution space to $\{0,1\}^{\binom{n}{2}}$ by substituting every variable $x_{ij}$, where $j>i$, with $1-x_{ji}$.
An \emph{optimal solution} of $\lop{A}$ is a feasible solution of $\lop{A}$ that is maximal with respect to the objective function in~\eqref{eq:lop-obj}.
Given an optimal solution of $\lop{A}$, the value of the objective function is called the \emph{optimal value} of $\lop{A}$.

Let $S_{n}$ denote the set of permutations on $[n]$.
Throughout this article, we use the term permutation and ranking interchangeably. 
Note that there is a bijection between $S_{n}$ and the set of feasible solutions of $\lop{A}$.
Indeed, every feasible solution $X=[x_{ij}]$ corresponds to a unique permutation $\sigma\in S_{n}$, where $x_{ij}=1$ if and only if $\sigma(i)<\sigma(j)$.
We say that $\sigma\in S_{n}$ is an \emph{optimal ranking} of $\lop{A}$ if it corresponds to an optimal solution $X$ of $\lop{A}$.

The \emph{Linear Ordering Polytope}, which we denote by $P^{n}_{\textrm{LO}}$, is the convex hull of all feasible solutions of $\lop{A}$.
While $P^{n}_{\textrm{LO}}$ is technically contained in $\mathbb{R}^{n(n-1)}$, using the constraint~\eqref{eq:lop-const1} it can be projected to $\mathbb{R}^{\binom{n}{2}}$.
For example, see Figure~\ref{fig:lo-polytope}, $P^{3}_{\textrm{LO}}$ can be viewed as the convex hull of the set of permutations on $(1,2,3)$.
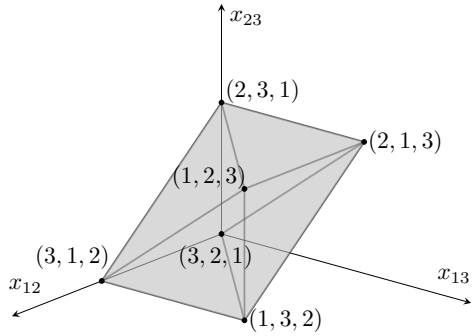
\begin{figure}[ht]
\centering
\resizebox{0.50\textwidth}{!}{
\begin{tikzpicture}
\begin{axis}[
	view={130}{25},
	axis lines=center,
	xtick=\empty, ytick=\empty, ztick=\empty,
	xmin=0,xmax=1.75,ymin=0,ymax=1.75,zmin=0,zmax=1.75,
	xlabel={$x_{12}$},ylabel={$x_{13}$},zlabel={$x_{23}$},
]
\coordinate (r1) at (0,0,0);
\coordinate (r2) at (1,0,0);
\coordinate (r3) at (1,1,0);
\coordinate (r4) at (1,1,1);
\coordinate (r5) at (0,1,1);
\coordinate (r6) at (0,0,1);

\begin{scope}[thick,opacity=0.6]
\draw (r1) -- (r5) -- (r6);
\draw (r1) -- (r3) -- (r5);
\draw (r2) -- (r3) -- (r4);
\draw (r6) -- (r2) -- (r4);
\draw (r6) -- (r4) -- (r5);
\end{scope}

\draw[fill=gray,opacity=0.6,draw=none] (r6) -- (r4) -- (r2);
\draw[fill=gray,opacity=0.6,draw=none] (r2) -- (r4) -- (r3);
\draw[fill=gray,opacity=0.6,draw=none] (r3) -- (r5) -- (r4);
\draw[fill=gray,opacity=0.6,draw=none] (r6) -- (r5) -- (r4);

\draw[black,fill] (0,1,1) circle (1pt) node[xshift=1.75em] {$(2,1,3)$};
\draw[black,fill] (0,0,1) circle (1pt) node[xshift=1.75em,yshift=0.5em] {$(2,3,1)$};
\draw[black,fill] (1,0,0) circle (1pt) node[xshift=-1.25em,yshift=1.0em] {$(3,1,2)$};
\draw[black,fill] (1,1,0) circle (1pt) node[xshift=1.75em] {$(1,3,2)$};
\draw[black,fill] (1,1,1) circle (1pt) node[xshift=-1.5em,yshift=0.5em] {$(1,2,3)$};
\draw[black,fill] (0,0,0) circle (1pt) node[xshift=-0.25em,yshift=-1.0em] {$(3,2,1)$};
\end{axis}
\end{tikzpicture}%
}
\caption{Linear Ordering Polytope: $P^{3}_{\textrm{LO}}$}
\label{fig:lo-polytope}
\end{figure}

As with any polytope, the \emph{minimal equation system} of $P^{n}_{\textrm{LO}}$ is defined as the largest possible collection of linearly independent equations satisfied by all points of $P^{n}_{\textrm{LO}}$~\cite{Grotschel1985:3}.
Furthermore, the \emph{dimension} of $P^{n}_{\textrm{LO}}$, denoted $\dim{P^{n}_{\textrm{LO}}}$, is defined by the cardinality of the largest possible affinely independent subset of $P^{n}_{\textrm{LO}}$.
The dimension theorem states that $\dim{P^{n}_{\textrm{LO}}}$ is equal to the dimension of the ambient space minus the size of the minimal equation system of $P^{n}_{\textrm{LO}}$~\cite[Section 0.5]{Lee2004}.
A classical result from Gr{\"o}tschel, J{\"u}nger and Reinelt~\cite{Grotschel1985:2} shows that the minimal equation system of $P^{n}_{\textrm{LO}}$ is completely described by constraint~\eqref{eq:lop-const1}; hence, the dimension of the linear ordering polytope satisfies
\[
\dim{P^{n}_{\textrm{LO}}} = n(n-1) - \frac{n(n-1)}{2} = \binom{n}{2}.
\]
Note that $a^{T}x\leq a_{0}$, where $a\in\mathbb{R}^{n(n-1)}$ and $a_{0}\in\mathbb{R}$, denotes a \emph{valid inequality} of $P^{n}_{\textrm{LO}}$ if it is satisfied by all $x\in P^{n}_{\textrm{LO}}$. 
Furthermore, a \emph{face} $F\subseteq P^{n}_{\textrm{LO}}$ is defined by $F=\left\{x\in P^{n}_{\textrm{LO}}\colon a^{T}x=a_{0}\right\}$ and \emph{facet} of $P^{n}_{\textrm{LO}}$ is a face of $P^{n}_{\textrm{LO}}$ of dimension $\dim{P^{n}_{\textrm{LO}}} - 1$. 

To solve the LOP, linear programming techniques such as the simplex method~\cite{Dantzig1991} and the ellipsoid method~\cite{Khachiyan1979} require the polytope to satisfy
\[
P^{n}_{\textrm{LO}} \subseteq \left\{x\in\mathbb{R}^{n(n-1)}\colon Cx\leq b\right\},
\]
for some $C\in\mathbb{R}^{m\times n(n-1)}$ and $b\in\mathbb{R}^{m}$.
It is worth noting that the computer software works with the hyperplane description of the polytope, i.e., the right hand side of the above equation, and not the convex hull description, which is not practical as it requires all $n!$ feasible solutions.

For $n\leq 5$, the minimal equation system~\eqref{eq:lop-const1}, the 3-dicycle inequalities~\eqref{eq:lop-const2}, and the trivial inequalities $0\leq x_{ij}\leq 1$, $i\neq j\in[n]$, completely describe $P^{n}_{\textrm{LO}}$; however, for larger dimensions, additional valid inequalities are required. 
To avoid redundant inequalities, there has been considerable interest in defining facet inequalities for $P^{n}_{\textrm{LO}}$.
In~\cite{Grotschel1985:2}, it is shown that simple $k$-fences and simple M\"obius ladders can be used to construct facet defining inequalities of $P^{n}_{\textrm{LO}}$. 
For small dimensions, the Fourier-Motzkin algorithm can be used to compute complete descriptions of $P^{n}_{\textrm{LO}}$. 
For $n=6$, there are $15$ equations forming the minimal equation system, $30$ trivial inequalities, $40$ 3-dicycle inequalities, $120$ $3$-fence inequalities, and $360$ M\"obius ladder inequalities~\cite{Marti2011}.
The complete description for $n=7$ given in~\cite{Reinelt1993} consists of $87472$ facets.

In general, there is good reason to believe it is unlikely to efficiently arrive at a complete description of $P^{n}_{\textrm{LO}}$ such that the number of facets is bound by a polynomial in $n$. 
Indeed, if we had such a complete description of $P^{n}_{\textrm{LO}}$, then we could invoke a polynomial-time algorithm like the ellipsoid method to find the optimal solution. 
In turn this would imply a polynomial-time algorithm for the LOP, which appears unlikely for an NP-hard problem.
\begin{example}\label{ex:digraphs}
Consider the digraphs shown in Figure~\ref{fig:digraphs}, where the weight of each edge is equal to $1$.
Each of these digraphs correspond to a LOP that seeks to find a spanning acyclic tournament sub-digraph with maximal weight sum, using the convention that an edge has weight zero if and only if the edge does not exist. 
\begin{figure}[ht]
\centering
\resizebox{0.80\textwidth}{!}{
\begin{tabular}{cccc}
\begin{tikzpicture}
	\begin{scope}[every node/.style={circle,draw=black,fill=black!20}]
		\node (1) at (2,0) {\textbf{1}};
		\node (2) at (-1,1.732) {\textbf{2}};
		\node (3) at (-1,-1.732) {\textbf{3}};
	\end{scope}
	\begin{scope}[every edge/.style={draw=black,>=latex,->,very thick}]
		\draw (1) edge node{} (2);
		\draw (1) edge node{} (3);
		\draw (2) edge node{} (3);
	\end{scope}
\end{tikzpicture}
&
\begin{tikzpicture}
	\begin{scope}[every node/.style={circle,draw=black,fill=black!20}]
		\node (1) at (2,0) {\textbf{1}};
		\node (2) at (-1,1.732) {\textbf{2}};
		\node (3) at (-1,-1.732) {\textbf{3}};
	\end{scope}
	\begin{scope}[every edge/.style={draw=black,>=latex,->,very thick}]
		\draw (1) edge node{} (2);
		\draw (1) edge node{} (3);
	\end{scope}
	\begin{scope}[every edge/.style={draw=black,>=latex,<->,very thick}]
		\draw (2) edge node{} (3);
	\end{scope}
\end{tikzpicture}
&
\begin{tikzpicture}
	\begin{scope}[every node/.style={circle,draw=black,fill=black!20}]
		\node (1) at (2,0) {\textbf{1}};
		\node (2) at (-1,1.732) {\textbf{2}};
		\node (3) at (-1,-1.732) {\textbf{3}};
	\end{scope}
	\begin{scope}[every edge/.style={draw=black,>=latex,->,very thick}]
		\draw (1) edge node{} (2);
		\draw (2) edge node{} (3);
		\draw (3) edge node{} (1);
	\end{scope}
\end{tikzpicture}
&
\begin{tikzpicture}
	\begin{scope}[every node/.style={circle,draw=black,fill=black!20}]
		\node (1) at (2,0) {\textbf{1}};
		\node (2) at (-1,1.732) {\textbf{2}};
		\node (3) at (-1,-1.732) {\textbf{3}};
	\end{scope}
	\begin{scope}[every edge/.style={draw=black,>=latex,<->,very thick}]
		\draw (1) edge node{} (2);
		\draw (1) edge node{} (3);
		\draw (2) edge node{} (3);
	\end{scope}
\end{tikzpicture}
\\ \textbf{Digraph I} & \textbf{Digraph II} & \textbf{Digraph III} & \textbf{Digraph IV} \\
\end{tabular}%
}
\caption{Simple Digraphs on $3$ vertices}
\label{fig:digraphs}
\end{figure}
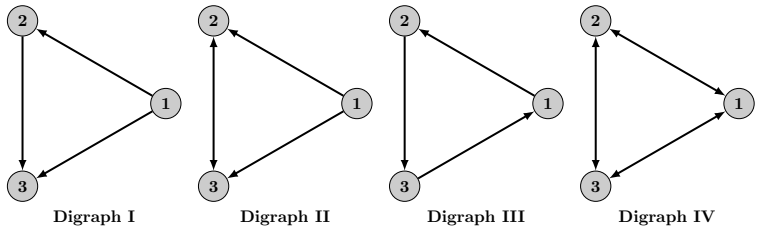

In Figure~\ref{fig:opt-sol}, we display the optimal ranking(s) for each LOP corresponding to the Digraphs I-IV from Figure~\ref{fig:digraphs}.
Each optimal ranking is denoted by a black vertex on the linear ordering polytope; in the case where there are multiple optimal solutions, the black vertices are connected by black lines. 
Note that every point on the black line corresponds to an optimal solution of the linear programming relaxation~\cite{Marti2011}.
In the case where the black lines outline a face of the linear ordering polytope, every point on the face corresponds to an optimal solution of the linear programming relaxation.
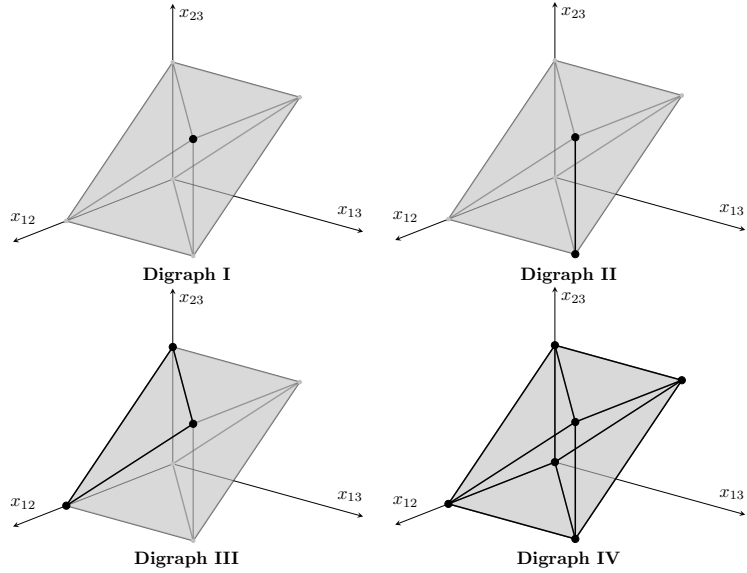
\begin{figure}[ht]
\centering
\resizebox{0.80\textwidth}{!}{
\begin{tabular}{cc}
\begin{tikzpicture}
\begin{axis}[
	view={130}{25},
	axis lines=center,
	xtick=\empty, ytick=\empty, ztick=\empty,
	xmin=0,xmax=1.5,ymin=0,ymax=1.5,zmin=0,zmax=1.5,
	xlabel={$x_{12}$},ylabel={$x_{13}$},zlabel={$x_{23}$},
]
\coordinate (r1) at (0,0,0);
\coordinate (r2) at (1,0,0);
\coordinate (r3) at (1,1,0);
\coordinate (r4) at (1,1,1);
\coordinate (r5) at (0,1,1);
\coordinate (r6) at (0,0,1);

\begin{scope}[thick,opacity=0.6]
\draw (r1) -- (r5) -- (r6);
\draw (r1) -- (r3) -- (r5);
\draw (r2) -- (r3) -- (r4);
\draw (r6) -- (r2) -- (r4);
\draw (r6) -- (r4) -- (r5);
\end{scope}

\draw[fill=gray,opacity=0.6,draw=none] (r6) -- (r4) -- (r2);
\draw[fill=gray,opacity=0.6,draw=none] (r2) -- (r4) -- (r3);
\draw[fill=gray,opacity=0.6,draw=none] (r3) -- (r5) -- (r4);
\draw[fill=gray,opacity=0.6,draw=none] (r6) -- (r5) -- (r4);

\draw[gray,fill] (0,1,1) circle (1pt);
\draw[gray,fill] (0,0,1) circle (1pt);
\draw[gray,fill] (1,0,0) circle (1pt);
\draw[gray,fill] (1,1,0) circle (1pt);
\draw[black,fill] (1,1,1) circle (2pt);
\draw[gray,fill] (0,0,0) circle (1pt);
\end{axis}
\end{tikzpicture}
&
\begin{tikzpicture}
\begin{axis}[
	view={130}{25},
	axis lines=center,
	xtick=\empty, ytick=\empty, ztick=\empty,
	xmin=0,xmax=1.5,ymin=0,ymax=1.5,zmin=0,zmax=1.5,
	xlabel={$x_{12}$},ylabel={$x_{13}$},zlabel={$x_{23}$},
]
\coordinate (r1) at (0,0,0);
\coordinate (r2) at (1,0,0);
\coordinate (r3) at (1,1,0);
\coordinate (r4) at (1,1,1);
\coordinate (r5) at (0,1,1);
\coordinate (r6) at (0,0,1);

\begin{scope}[thick,opacity=0.6]
\draw (r1) -- (r5) -- (r6);
\draw (r1) -- (r3) -- (r5);
\draw (r2) -- (r3) -- (r4);
\draw (r6) -- (r2) -- (r4);
\draw (r6) -- (r4) -- (r5);
\end{scope}

\draw[fill=gray,opacity=0.6,draw=none] (r6) -- (r4) -- (r2);
\draw[fill=gray,opacity=0.6,draw=none] (r2) -- (r4) -- (r3);
\draw[fill=gray,opacity=0.6,draw=none] (r3) -- (r5) -- (r4);
\draw[fill=gray,opacity=0.6,draw=none] (r6) -- (r5) -- (r4);

\draw[gray,fill] (0,1,1) circle (1pt);
\draw[gray,fill] (0,0,1) circle (1pt);
\draw[gray,fill] (1,0,0) circle (1pt);
\draw[black,fill] (1,1,0) circle (2pt);
\draw[black,fill] (1,1,1) circle (2pt);
\draw[gray,fill] (0,0,0) circle (1pt);

\draw[draw=black,thick] (r3)--(r4);
\end{axis}
\end{tikzpicture}
\\ \textbf{Digraph I} & \textbf{Digraph II} \\
\begin{tikzpicture}
\begin{axis}[
	view={130}{25},
	axis lines=center,
	xtick=\empty, ytick=\empty, ztick=\empty,
	xmin=0,xmax=1.5,ymin=0,ymax=1.5,zmin=0,zmax=1.5,
	xlabel={$x_{12}$},ylabel={$x_{13}$},zlabel={$x_{23}$},
]
\coordinate (r1) at (0,0,0);
\coordinate (r2) at (1,0,0);
\coordinate (r3) at (1,1,0);
\coordinate (r4) at (1,1,1);
\coordinate (r5) at (0,1,1);
\coordinate (r6) at (0,0,1);

\begin{scope}[thick,opacity=0.6]
\draw (r1) -- (r5) -- (r6);
\draw (r1) -- (r3) -- (r5);
\draw (r2) -- (r3) -- (r4);
\draw (r6) -- (r2) -- (r4);
\draw (r6) -- (r4) -- (r5);
\end{scope}

\draw[fill=gray,opacity=0.6,draw=none] (r6) -- (r4) -- (r2);
\draw[fill=gray,opacity=0.6,draw=none] (r2) -- (r4) -- (r3);
\draw[fill=gray,opacity=0.6,draw=none] (r3) -- (r5) -- (r4);
\draw[fill=gray,opacity=0.6,draw=none] (r6) -- (r5) -- (r4);

\draw[gray,fill] (0,1,1) circle (1pt);
\draw[black,fill] (0,0,1) circle (2pt);
\draw[black,fill] (1,0,0) circle (2pt);
\draw[gray,fill] (1,1,0) circle (1pt);
\draw[black,fill] (1,1,1) circle (2pt);
\draw[gray,fill] (0,0,0) circle (1pt);

\draw[draw=black,thick] (r4) -- (r2) -- (r6) -- cycle;
\end{axis}
\end{tikzpicture}
&
\begin{tikzpicture}
\begin{axis}[
	view={130}{25},
	axis lines=center,
	xtick=\empty, ytick=\empty, ztick=\empty,
	xmin=0,xmax=1.5,ymin=0,ymax=1.5,zmin=0,zmax=1.5,
	xlabel={$x_{12}$},ylabel={$x_{13}$},zlabel={$x_{23}$},
]
\coordinate (r1) at (0,0,0);
\coordinate (r2) at (1,0,0);
\coordinate (r3) at (1,1,0);
\coordinate (r4) at (1,1,1);
\coordinate (r5) at (0,1,1);
\coordinate (r6) at (0,0,1);

\begin{scope}[thick,opacity=0.6]
\draw (r1) -- (r5) -- (r6);
\draw (r1) -- (r3) -- (r5);
\draw (r2) -- (r3) -- (r4);
\draw (r6) -- (r2) -- (r4);
\draw (r6) -- (r4) -- (r5);
\end{scope}

\draw[fill=gray,opacity=0.6,draw=none] (r6) -- (r4) -- (r2);
\draw[fill=gray,opacity=0.6,draw=none] (r2) -- (r4) -- (r3);
\draw[fill=gray,opacity=0.6,draw=none] (r3) -- (r5) -- (r4);
\draw[fill=gray,opacity=0.6,draw=none] (r6) -- (r5) -- (r4);

\draw[black,fill] (0,1,1) circle (2pt);
\draw[black,fill] (0,0,1) circle (2pt);
\draw[black,fill] (1,0,0) circle (2pt);
\draw[black,fill] (1,1,0) circle (2pt);
\draw[black,fill] (1,1,1) circle (2pt);
\draw[black,fill] (0,0,0) circle (2pt);

\draw[draw=black,thick] (r1) -- (r5) -- (r6) -- cycle;
\draw[draw=black,thick] (r1) -- (r2) -- (r6);
\draw[draw=black,thick] (r2) -- (r3) -- (r4) -- cycle;
\draw[draw=black,thick] (r1) -- (r3) -- (r5);
\draw[draw=black,thick] (r6) -- (r4) -- (r5);
\end{axis}
\end{tikzpicture}
\\ \textbf{Digraph III} & \textbf{Digraph IV} \\
\end{tabular}%
}
\caption{Optimal Solution(s) on Linear Ordering Polytope}
\label{fig:opt-sol}
\end{figure}

Since Digraph I is itself an acyclic tournament, it follows that there is one unique optimal solution corresponding to the ranking $(1,2,3)$.
There is a bidirectional edge between vertices $2$ and $3$ in Digraph II.
Hence, the objective value in~\eqref{eq:lop-obj} does not vary between solutions $x_{23}=1$ and $x_{32}=1$. 
Therefore, there are two distinct optimal solutions corresponding to the optimal rankings $(1,2,3)$ and $(1,3,2)$. 
Digraph III is a 3-dicycle and is therefore isomorphic under cyclic-permutations of the vertices.
Thus, there are three distinct optimal solutions corresponding to the optimal rankings $(1,2,3)$, $(3,1,2)$, and $(2,3,1)$. 
Finally, Digraph IV is the complete digraph on $3$ vertices, and it follows that every point on $P^{3}_{\textrm{LO}}$ corresponds to an optimal ranking.
\end{example}

\section{Rankability and the Linear Ordering Problem}\label{sec:rank-lop}
In ranking applications, the data is often assumed to be totally rankable, i.e., observations are some noisy evaluations of an objective function that reflects the true unique ranking. 
Recently, however, some in the ranking community have challenged this assumption; specifically, in~\cite{Anderson2019,Cameron2020_SR} quantitative metrics are proposed for measuring the rankability of data, and in~\cite{Chau2020} general preference modeling methods are developed without assuming total rankability.

Since the linear ordering problem results in a ranking that is optimal with respect to the objective function in~\eqref{eq:lop-obj}, it is natural to consider what properties of the LOP can be used to analyze the rankability of the underlying data.
In particular, we show that the degree of linearity can be used to quantify what percentage of the data aligns with an optimal ranking, and the optimal diameter can be used to quantify the diversity among all optimal ranking(s).

\subsection{The Degree of Linearity}\label{subsec:deg-lin}
Given $a_{ij}\in\mathbb{R}_{\geq 0}$, for $i,j\in[n]$, not all zero, the \emph{degree of linearity} of $A=[a_{ij}]$ is defined as follows~\cite{Marti2011}:
\[
\lambda(A) = \max_{\sigma\in S_{n}}\frac{\sum_{\sigma(i)<\sigma(j)}a_{ij}}{\sum_{i\neq j}a_{ij}},
\]
i.e., $\lambda(A)$ is the maximum upper-triangular sum as a percentage of the total sum of $A$, over all possible symmetric re-orderings of $A$.
Since the maximum is clearly attained at an optimal ranking of $\lop{A}$, it follows that the degree of linearity of $A$ can be written as
\[
\lambda(A) = \frac{k^{*}}{\sum_{i\neq j}a_{ij}},
\]
where $k^{*}$ denotes the optimal value of $\lop{A}$.
Note that Digraphs I--IV in Figure~\ref{fig:digraphs} have an adjacency matrix with degree of linearity equal to $1$, $3/4$, $2/3$, and $1/2$, respectively. 
In general, the degree of linearity is a value between $1/2$ and $1$~\cite{Marti2011}.

The degree of linearity is used in linear programming to study the integrality gap of the tournament relaxation~\cite{Marti2011}, and in economics to study economic development~\cite{Chiarni2004}.
With regards to the former application, note that the \emph{integrality gap} is the ratio of the optimal objective value associated with the linear programming relaxation and the optimal objective value of the LOP.
For the latter application, researchers have noted that large and highly developed economies have a low degree of linearity since there is a high circulation in the flow of goods among sectors, whereas underdeveloped economies tend to exhibit a clear hierarchy and, hence, a high degree of linearity.
Typical degree of linearity values are $70\%$ for developed economies and $90\%$ for underdeveloped economies~\cite{Leontiff1986}. 

For rankability purposes, we are interested in the degree of linearity as it quantifies the percentage of data that aligns with an optimal ranking.
\begin{example}\label{ex:college-rankings}
\begin{figure}
\centering
\resizebox{1.0\textwidth}{!}{
\begin{tabular}{c|c|c|c|c|c}
College & Selectivity & Faculty Resource & Student/Faculty Ratio & Graduate Retention & Financial Resources \\
\hline
Amherst & 5 & 7 & 9/1 & 1 & 10 \\
Bowdoin & 8 & 14 & 10/1 & 6 & 14 \\
Carleton & 12 & 16 & 9/1 & 4 & 27 \\
Claremont & 14 & 4 & 9/1 & 11 & 21 \\
Haveford & 2 & 5 & 8/1 & 6 & 15 \\
Middlebury & 6 & 17 & 9/1 & 11 & 3 \\
Pomona & 2 & 20 & 8/1 & 1 & 6 \\
Swarthmore & 6 & 7 & 8/1 & 4 & 9 \\
Wellesley & 12 & 12 & 8/1 & 14 & 10 \\
Williams & 4 & 3 & 7/1 & 1 & 6 \\
\hline
\end{tabular}%
}
\caption{College Features from U.S. World News 2013 Rankings}
\label{fig:college-ranking}
\end{figure}

In Figure~\ref{fig:college-ranking}, the rankings of several features used in the U.S. World News College Rankings are reported for 10 liberal arts colleges, listed in alphabetical order, for the year 2013. 
These rankings, and others, can be found from the supplementary material in~\cite{Chartier2014}.
Numbering the colleges as they appear from top to bottom in Figure~\ref{fig:college-ranking}, i.e., in alphabetical order, let $a_{ij}$ denote the number of features that college $i$ outperforms college $j$, where $0.5$ is awarded to both schools in the case of a tie. 
Then, the adjacency matrix $A=[a_{ij}]$ is defined by
\[
A = \begin{bmatrix}
		0 & 5 & 4.5 & 3.5 & 2 & 3.5 & 1.5 & 2.5 & 3.5 & 0.5 \\
		0 & 0 & 3 & 3 & 1.5 & 2 & 1 & 0 & 2 & 0 \\
		0.5 & 2 & 0 & 2.5 & 1 & 2.5 & 1 & 0.5 & 1.5 & 0 \\
		1.5 & 2 & 2.5 & 0 & 1 & 2 & 1 & 1 & 2 & 0 \\
		3 & 3.5 & 4 & 4 & 0 & 4 & 2 & 2.5 & 3.5 & 1 \\
		1.5 & 3 & 2.5 & 3 & 1 & 0 & 1 & 1.5 & 3 & 0 \\ 
		3.5 & 4 & 4 & 4 & 3 & 4 & 0 & 3.5 & 3.5 & 2 \\
		2.5 & 5 & 4.5 & 4 & 2.5 & 3.5 & 1.5 & 0 & 4.5 & 0 \\
		1.5 & 3 & 3.5 & 3 & 1.5 & 2 & 1.5 & 0.5 & 0 & 0 \\
		4.5 & 5 & 5 & 5 & 4 & 5 & 3 & 5 & 5 & 0 \\
	\end{bmatrix},
\]
and an optimal solution of $\lop{A}$ will correspond to an optimal aggregate of the college feature rankings in Figure~\ref{fig:college-ranking}.  
For instance, using the \emph{CPLEX} optimization suite~\cite{cplex2019}, we are able to find an optimal ranking
\[
\sigma = (10,7,8,5,1,6,9,2,4,3)
\]
of $\lop{A}$, which indicates that with respect to the given data, Williams can be considered the best college and Carleton the worst college.
Moreover, the symmetric re-ordering of $A$ with respect to $\sigma$ results in the following matrix
\[
A_{\sigma} = \begin{bmatrix}
		0 &  3 &  4 &  4.5 &  5 &  5 &  5 &  5 &  5 &  5 \\
2 &  0 &  3 &  3.5 &  3.5 &  4 &  3.5 &  4 &  4 &  4 \\
1 &  2 &  0 &  3 &  2.5 &  4 &  3.5 &  3.5 &  4 &  4 \\
0.5 &  1.5 &  2 &  0 &  2.5 &  3.5 &  3.5 &  5 &  4.5 &  3 \\
0 &  1.5 &  2.5 &  2.5 &  0 &  3.5 &  4.5 &  5 &  4.5 &  4 \\
0 &  1 &  1 &  1.5 &  1.5 &  0 &  3 &  3 &  2.5 &  3 \\
0 &  1.5 &  1.5 &  1.5 &  0.5 &  2 &  0 &  3 &  3.5 &  3 \\ 
0 &  1 &  1.5 &  0 &  0 &  2 &  2 &  0 &  3 &  3 \\
0 &  1 &  1 &  0.5 &  0.5 &  2.5 &  1.5 &  2 &  0 &  2 \\ 
0 &  1 &  1 &  1.5 &  1 &  2 &  2 &  2 &  2.5 &  0 \\
	\end{bmatrix},
\]
which can be formed by swapping the rows and columns indicated by the permutation $\sigma$.
Taking the ratio of the upper-triangular sum and the total matrix sum gives
\[
\lambda(A) = \frac{169}{225} \approx 0.75.
\]
It is important to note that the values in the upper-triangular portion of $A_{\sigma}$ correspond to the number of features for which a higher ranked college in $\sigma$ outperformed a lower ranked college.
Thus, approximately $75\%$ of the data aligns with the optimal ranking $\sigma$, i.e., only $25\%$ of the pairwise feature comparisons between any two colleges will disagree with the optimal ranking $\sigma$. 

Furthermore, this observation holds for all optimal rankings of $\lop{A}$.
That is, while different optimal rankings will disagree on the ordering of certain schools, e.g., see Example~\ref{ex:kt_college_rankings}, they all align with approximately $75\%$ of the pairwise data.
\end{example}
\subsection{The Optimal Diameter}\label{subsec:opt-dia}
Feasible binary programs such as the linear ordering problem and traveling salesman problem, often have multiple optimal solutions, which is of interest in applications as they allow the user to choose between alternative optima without deteriorating the objective function~\cite{Glover1998,Kuo1993,Petit2019,Tsai2008}.
Note that the Digraphs I--IV in Figure~\ref{fig:digraphs} correspond to LOPs with $1$, $2$, $3$, and $6$ optimal solutions, respectively.

In applications, the number of optimal solutions can be too large for enumeration to be practical. 
Therefore, in~\cite{Cameron2020_KT}, the authors present the optimal diameter of a feasible binary program as a metric for measuring the diversity among all optimal solutions. 
In particular, a binary program is developed whose optima contain two optimal solutions of the given binary program that are as diverse as possible with respect to the optimal diameter. 
The main focus of their study is on the diameter polytope, i.e., the polytope underlying the optimal diameter binary program, and to show that much of its structure is inherited from the polytope underlying the given binary program. 

For rankability purposes, we are not interested in studying the diameter polytope of general binary programs, but rather the optimal diameter of a given LOP.
Also, in this context, the optimal diameter can be regarded as the maximal Kendall tau rank distance between any two optimal rankings of the LOP~\cite[Proposition 3.2]{Cameron2020_KT}.
The \emph{Kendall tau rank distance} was developed by Maurice Kendall in 1938~\cite{Kendall1938}, and is equal to the number of discordant pairs between two rankings.
For instance, the Digraphs I-IV in Figure~\ref{fig:digraphs} correspond to LOPs whose optimal solution(s) have a maximal Kendall tau ranking distance of $0$, $1$, $2$, and $3$, respectively. 

Here, we develop a binary program whose optima contain two optimal solutions of a given LOP that are as far apart as possible with respect to the Kendall tau ranking distance. 
This program is similar to but more straightforward than the optimal diameter binary program in~\cite{Cameron2020_KT}.
Also, this program is similar to those developed in~\cite{Anderson2021,Kondo2014}, where the author in~\cite{Kondo2014} is interested in two optimal solutions of separate LOPs that are as close together as possible, and the authors in~\cite{Anderson2021} are interested in two optimal solutions of the same LOP that are as far apart as possible. 
The programs in~\cite{Anderson2021,Kondo2014} use the projection trick for LOP discussed in Section~\ref{sec:lop} to cut the number variables in half. 
After implementing the same projection trick, our model will have $\binom{n}{2}$ fewer variables and $\binom{n}{2}$ fewer constraints than the models in~\cite{Anderson2021,Kondo2014}.
Finally, we note that the validity of the models in~\cite{Anderson2021,Kondo2014} is never proven as we do for our model here.

Let $k^{*}$ denote the optimal value of $\lop{A}$ and consider the following binary program, which we denote by $\kt{A}{k^{*}}$:
\begin{mini!}
	{}{\sum_{i\neq j\colon i,j\in[n]}z_{ij}}{}{}\label{eq:kt-obj}
	\addConstraint{x_{ij}+x_{ji}}{=1,\quad\forall i<j\colon i,j\in[n]}\label{eq:kt-constx1}
	\addConstraint{y_{ij}+y_{ji}}{=1,\quad\forall i<j\colon i,j\in[n]}\label{eq:kt-consty1}
	\addConstraint{x_{ij}+x_{jk}+x_{ki}}{\leq 2,\quad\forall i<j,i<k,j\neq k\colon i,j,k\in[n]}\label{eq:kt-constx2}
	\addConstraint{y_{ij}+y_{jk}+y_{ki}}{\leq 2,\quad\forall i<j,i<k,j\neq k\colon i,j,k\in[n]}\label{eq:kt-consty2}
	\addConstraint{\sum_{i\neq j\colon i,j\in[n]}a_{ij}x_{ij}}{=k^{*}}\label{eq:kt-constx3}
	\addConstraint{\sum_{i\neq j\colon i,j\in[n]}a_{ij}y_{ij}}{=k^{*}}\label{eq:kt-consty3}
	\addConstraint{x_{ij}+y_{ij}-z_{ij}}{\leq 1,\quad\forall i\neq j\colon i,j\in[n]}\label{eq:kt-const}
	\addConstraint{x_{ij},y_{ij},z_{ij}}{\in\{0,1\},\quad\forall i\neq j\colon i,j\in[n],}\label{eq:kt-constxyz}
\end{mini!}
where the $x_{ij}$ and $y_{ij}$ are decision variables that form optimal solutions of $\lop{A}$, by constraints~\eqref{eq:kt-constx1}--\eqref{eq:kt-consty3}, and the decision variable $z_{ij}$ is used to ensure that these optimal solutions are as far apart as possible, by constraint~\eqref{eq:kt-constxyz}.

A binary vector $X\oplus Y\oplus Z$, where $X=[x_{ij}]$, $Y=[y_{ij}]$, and $Z=[z_{ij}]$, is a \emph{feasible solution} of $\kt{A}{k^{*}}$ provided that $x_{ij},~y_{ij},~z_{ij}$ satisfy constraints~\eqref{eq:kt-constx1}--\eqref{eq:kt-constxyz}.
An \emph{optimal solution} of $\kt{A}{k^{*}}$ is a feasible solution of $\kt{A}{k^{*}}$ that is minimal with respect to the objective function in~\eqref{eq:kt-obj}. 
Given an optimal solution of $\kt{A}{k^{*}}$, the objective function value is called the \emph{optimal value} of $\kt{A}{k^{*}}$.
\begin{proposition}\label{prop:kt-opt}
Given an optimal solution of $\kt{A}{k^{*}}$, $z_{ij}=1$ for some $i\neq j$, $i,j\in[n]$, if and only if $x_{ij}=y_{ij}=1$. 
\end{proposition}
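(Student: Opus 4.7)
The plan is to exploit the fact that each variable $z_{ij}$ appears in exactly one inequality, namely~\eqref{eq:kt-const}, and contributes with unit positive coefficient to the objective~\eqref{eq:kt-obj}. This means the $z_{ij}$'s are uncoupled from each other: in any optimal solution, each one is driven down to the smallest binary value permitted by its single constraint, so $z_{ij}=\max\{0,x_{ij}+y_{ij}-1\}$, which is precisely the characterization the proposition asserts.

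I would carry out the argument in two short steps. For the backward direction ($x_{ij}=y_{ij}=1$ implies $z_{ij}=1$), observe that constraint~\eqref{eq:kt-const} rearranges to $z_{ij}\geq x_{ij}+y_{ij}-1$, so substituting forces $z_{ij}\geq 1$, and the binary constraint~\eqref{eq:kt-constxyz} then pins $z_{ij}=1$. This direction requires no appeal to optimality, only feasibility.

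For the forward direction ($z_{ij}=1$ implies $x_{ij}=y_{ij}=1$), I would argue by contradiction. Let $X\oplus Y\oplus Z$ be an optimal solution, and suppose $z_{ij}=1$ but $x_{ij}+y_{ij}\leq 1$. Define $Z'$ to agree with $Z$ everywhere except $z'_{ij}=0$. Since constraints \eqref{eq:kt-constx1}--\eqref{eq:kt-consty3} do not involve any $z$ variables, they remain satisfied; and constraint~\eqref{eq:kt-const} for the pair $(i,j)$ reads $x_{ij}+y_{ij}-0\leq 1$, which holds by assumption. Thus $X\oplus Y\oplus Z'$ is feasible, but its objective value is strictly smaller than that of $X\oplus Y\oplus Z$, contradicting optimality of the latter.

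I do not anticipate any genuine obstacle here; the proof is essentially a bookkeeping argument. The one subtlety worth flagging explicitly is that the modification $Z\mapsto Z'$ touches only a single coordinate, and one must verify that no constraint other than~\eqref{eq:kt-const} for the index pair $(i,j)$ involves that coordinate---an inspection of~\eqref{eq:kt-constx1}--\eqref{eq:kt-constxyz} makes this immediate.
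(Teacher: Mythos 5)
Your proposal is correct and follows essentially the same route as the paper: the backward direction is forced by constraint~\eqref{eq:kt-const} together with binarity, and the forward direction is the same exchange argument showing that setting $z_{ij}=0$ preserves feasibility and strictly decreases the objective~\eqref{eq:kt-obj}, contradicting optimality. Your explicit construction of $Z'$ and the check that no other constraint involves $z_{ij}$ just spells out what the paper states more tersely.
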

\begin{proof}
For the sake of contradiction, suppose that $z_{ij}=1$ for some $i\neq j$, $i,j\in[n]$, and either $x_{ij}=0$ or $y_{ij}=0$. 
Then, constraint~\eqref{eq:kt-const} would also be satisfied if $z_{ij}=0$; hence, $z_{ij}=1$ contradicts the objective sense of~\eqref{eq:kt-obj}.
Conversely, suppose that $x_{ij}=y_{ij}=1$ for some $i\neq j$, $i,j\in[n]$.
Then, $z_{ij}=1$ in order to satisfy~\eqref{eq:kt-const}, and the result follows. 
\end{proof}

For two rankings, $\sigma_{1},\sigma_{2}\in S_{n}$, we define the set of \emph{discordant pairs}, denoted $D(\sigma_{1},\sigma_{2})$, by the set of all $(i,j)\in[n]\times[n]$ such that $\sigma_{1}$ and $\sigma_{2}$ don't agree on their relative ranking of $i$ and $j$, i.e., $\sigma_{1}(i)<\sigma_{1}(j)$ and $\sigma_{2}(i)>\sigma_{2}(j)$, or $\sigma_{1}(i)>\sigma_{1}(j)$ and $\sigma_{2}(i)<\sigma_{2}(j)$. 
Also, the set of \emph{concordant pairs}, denoted $C(\sigma_{1},\sigma_{2})$, is defined by the set of $(i,j)\in[n]\times[n]$ such that $\sigma_{1}$ and $\sigma_{2}$ do agree on their relative ranking of $i$ and $j$, i.e., $C(\sigma_{1},\sigma_{2}) = T - D(\sigma_{1},\sigma_{2})$, where $T=\left\{(i,j)\in[n]\times[n]\colon~i<j\right\}$.
Note that the Kendall tau ranking distance between $\sigma_{1}$ and $\sigma_{2}$ is equal to $\abs{D(\sigma_{1},\sigma_{2})}$. 
\begin{theorem}\label{thm:kt-opt}
Let $\sigma_{1},\sigma_{2}\in S_{n}$ be optimal rankings for $\lop{A}$ that correspond to an optimal solution of $\kt{A}{k^{*}}$.
Then, the optimal value of $\kt{A}{k^{*}}$ is equal to $\abs{C(\sigma_{1},\sigma_{2})}$.
\end{theorem}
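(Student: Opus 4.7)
The plan is to reduce the objective sum in~\eqref{eq:kt-obj} to a direct count of concordant pairs by combining Proposition~\ref{prop:kt-opt} with the bijection between feasible solutions of $\lop{A}$ and permutations in $S_{n}$.

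First, I would apply Proposition~\ref{prop:kt-opt} to rewrite the objective value as
\[
\sum_{i\neq j\colon i,j\in[n]} z_{ij} \;=\; \bigl|\{(i,j)\colon i\neq j,\; x_{ij}=y_{ij}=1\}\bigr|.
\]
Next, using the bijection between optimal solutions of $\lop{A}$ and optimal rankings recalled in Section~\ref{sec:lop}, I would translate the condition $x_{ij}=y_{ij}=1$ into the permutation language: it holds if and only if $\sigma_{1}(i)<\sigma_{1}(j)$ and $\sigma_{2}(i)<\sigma_{2}(j)$ simultaneously.

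Then I would do a case analysis on each unordered pair $\{i,j\}$ with $i<j$ (i.e., each element of the set $T$ used in the definition of $C(\sigma_{1},\sigma_{2})$). If $(i,j)\in C(\sigma_{1},\sigma_{2})$, then $\sigma_{1}$ and $\sigma_{2}$ agree on $\{i,j\}$, so exactly one of the two ordered pairs $(i,j)$ and $(j,i)$ satisfies the above joint inequality, contributing precisely one to the count. If $(i,j)\in D(\sigma_{1},\sigma_{2})$, then $\sigma_{1}$ and $\sigma_{2}$ disagree on $\{i,j\}$, so neither ordered pair satisfies the joint inequality, contributing zero. Summing over all unordered pairs yields $\sum_{i\neq j}z_{ij} = |C(\sigma_{1},\sigma_{2})|$, which is the optimal value of $\kt{A}{k^{*}}$.

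The argument is essentially bookkeeping, and the main subtlety is simply being careful about the distinction between ordered pairs $(i,j)$ with $i\neq j$ (which is the index set for the $z_{ij}$ variables and the objective) and unordered pairs $\{i,j\}$ with $i<j$ (which is the index set underlying $C$ and $D$). The factor-of-two discrepancy is exactly absorbed because, on a concordant pair, precisely one orientation has $x_{ij}=y_{ij}=1$ while the reverse orientation has $x_{ji}=y_{ji}=0$. No facet-inequality machinery from the earlier discussion of $P^{n}_{\textrm{LO}}$ is needed here; Proposition~\ref{prop:kt-opt} does all of the combinatorial work.
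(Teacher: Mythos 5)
Your proof is correct and takes essentially the same route as the paper: invoke Proposition~\ref{prop:kt-opt} to turn the objective value into a count of ordered pairs $(i,j)$ with $x_{ij}=y_{ij}=1$, then identify exactly one such orientation with each concordant pair. Your explicit ordered-versus-unordered bookkeeping is, if anything, slightly more careful than the paper's wording of the same step.
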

\begin{proof}
By Proposition~\ref{prop:kt-opt}, the optimal value of $\kt{A}{k^{*}}$ counts the number of distinct pairs $(i,j)\in[n]\times[n]$, where $i\neq j$ and $x_{ij}=y_{ij}=1$.
If $i<j$, then $x_{ij}=y_{ij}=1$ if and only if $(i,j)\in C(\sigma_{1},\sigma_{2})$; otherwise, if $i>j$, then $x_{ij}=y_{ij}=1$ if and only if $(j,i)\in C(\sigma_{1},\sigma_{2})$.
Therefore, the optimal value of $\kt{A}{k^{*}}$ counts the number of distinct pairs in $C(\sigma_{1},\sigma_{2})$. 
\end{proof}

Now, given $\sigma_{1},\sigma_{2}\in S\subseteq S_{n}$, we say that $D(\sigma_{1},\sigma_{2})$ is \emph{maximal} with respect to $S$ provided that $\abs{D(\pi_{1},\pi_{2})}\leq \abs{D(\sigma_{1},\sigma_{2})}$ for all $\pi_{1},\pi_{2}\in S$. 
Similarly, we say that $C(\sigma_{1},\sigma_{2})$ is minimal with respect to $S$ provided that $\abs{C(\sigma_{1},\sigma_{2})}\leq\abs{C(\pi_{1},\pi_{2})}$ for all $\pi_{1},\pi_{2}\in S$. 
\begin{corollary}\label{cor:kt-opt}
Let $S\subseteq S_{n}$ denote the set of optimal rankings for $\lop{A}$.
Then, an optimal solution of $\kt{A}{k^{*}}$ yields $\sigma_{1},\sigma_{2}\in S$ such that $C(\sigma_{1},\sigma_{2})$ is minimal and, equivalently, $D(\sigma_{1},\sigma_{2})$ is maximal with respect to $S$.
\end{corollary}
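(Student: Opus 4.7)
The plan is to leverage Theorem~\ref{thm:kt-opt} together with the bijection between feasible solutions of $\lop{A}$ and permutations in $S_{n}$ recalled in Section~\ref{sec:lop}. First I would observe that any optimal solution of $\kt{A}{k^{*}}$ produces two permutations in $S$: constraints~\eqref{eq:kt-constx1},~\eqref{eq:kt-constx2},~\eqref{eq:kt-constx3} force $X$ to be a feasible solution of $\lop{A}$ with objective value $k^{*}$, hence optimal, so the corresponding permutation $\sigma_{1}$ lies in $S$; the analogous argument with $Y$ yields $\sigma_{2}\in S$. Theorem~\ref{thm:kt-opt} then identifies the optimal value of $\kt{A}{k^{*}}$ with $\abs{C(\sigma_{1},\sigma_{2})}$.

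The heart of the proof is to show that no pair in $S\times S$ can yield a strictly smaller concordant set. Given arbitrary $\pi_{1},\pi_{2}\in S$, I would build a candidate feasible solution of $\kt{A}{k^{*}}$ by setting $x_{ij}=1$ iff $\pi_{1}(i)<\pi_{1}(j)$, defining $y_{ij}$ analogously from $\pi_{2}$, and choosing $z_{ij}=\max(0,x_{ij}+y_{ij}-1)$, i.e.\ the smallest value consistent with~\eqref{eq:kt-const}. The LOP structural constraints~\eqref{eq:kt-constx1}--\eqref{eq:kt-consty2} are satisfied because $\pi_{1},\pi_{2}$ are permutations; constraints~\eqref{eq:kt-constx3}--\eqref{eq:kt-consty3} are satisfied because $\pi_{1},\pi_{2}\in S$; and the $z_{ij}$ were chosen to satisfy~\eqref{eq:kt-const}. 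By the same counting argument used in the proof of Theorem~\ref{thm:kt-opt} (via Proposition~\ref{prop:kt-opt}, since our $z_{ij}$ match the minimal choice enforced at optimality), the objective value of this feasible point is exactly $\abs{C(\pi_{1},\pi_{2})}$. Since the optimum of $\kt{A}{k^{*}}$ lower-bounds every feasible objective, we obtain $\abs{C(\sigma_{1},\sigma_{2})}\leq\abs{C(\pi_{1},\pi_{2})}$, establishing that $C(\sigma_{1},\sigma_{2})$ is minimal with respect to $S$.

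For the final equivalence, I would invoke the identity $\abs{C(\pi_{1},\pi_{2})}+\abs{D(\pi_{1},\pi_{2})}=\abs{T}=\binom{n}{2}$, which holds for every pair $\pi_{1},\pi_{2}\in S_{n}$ and depends only on $n$. Consequently, minimizing $\abs{C(\pi_{1},\pi_{2})}$ over $S\times S$ is the same optimization problem as maximizing $\abs{D(\pi_{1},\pi_{2})}$, so $D(\sigma_{1},\sigma_{2})$ is maximal precisely when $C(\sigma_{1},\sigma_{2})$ is minimal.

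The main obstacle, though a modest one, is verifying that the candidate solution built from $(\pi_{1},\pi_{2})$ truly realizes the objective value $\abs{C(\pi_{1},\pi_{2})}$: this requires combining Proposition~\ref{prop:kt-opt}'s characterization of the $z_{ij}$ at optimality with the permutation-based counting from Theorem~\ref{thm:kt-opt}, and noting that our explicit minimal choice of $z_{ij}$ coincides with what that proposition describes. Everything else is bookkeeping using the permutation/feasible-solution bijection.
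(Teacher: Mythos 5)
Your proposal is correct and follows essentially the same route as the paper: both arguments rest on the correspondence between pairs of optimal rankings in $S$ and feasible solutions of $\kt{A}{k^{*}}$, the minimization sense of the objective together with Theorem~\ref{thm:kt-opt}, and the identity $\abs{C(\sigma_{1},\sigma_{2})}=\binom{n}{2}-\abs{D(\sigma_{1},\sigma_{2})}$. The only difference is that you spell out the ``vice-versa'' direction explicitly by constructing a feasible point from an arbitrary pair $\pi_{1},\pi_{2}\in S$ with $z_{ij}=\max(0,x_{ij}+y_{ij}-1)$, which the paper leaves implicit.
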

\begin{proof}
Note that every pair of rankings $\sigma_{1},\sigma_{2}\in S$ corresponds to at least one feasible solution of $\kt{A}{k^{*}}$, and vice-versa.
Hence, by the objective sense of~\eqref{eq:kt-obj} and Theorem~\ref{thm:kt-opt}, any optimal solution of $\kt{A}{k^{*}}$ yields $\sigma_{1},\sigma_{2}$ such that $C(\sigma_{1},\sigma_{2})$ is minimal with respect to $S$. 
Furthermore, since
\[
\abs{C(\sigma_{1},\sigma_{2})} = \binom{n}{2} - \abs{D(\sigma_{1},\sigma_{2})},
\]
it follows that $D(\sigma_{1},\sigma_{2})$ is maximal with respect to $S$.
\end{proof}

Let $\kappa(A)$ denote the maximal Kendall tau ranking distance between any two optimal solutions of $\lop{A}$.
Then, Corollary~\ref{cor:kt-opt} implies that the binary program $\kt{A}{k^{*}}$ can be used to compute the value of $\kappa(A)$ for any $\lop{A}$ with optimal value $k^{*}$.
Furthermore, by Proposition~\ref{prop:kt-opt}, the optima of $\kt{A}{k^{*}}$ yields two optimal solutions $X$ and $Y$ of $\lop{A}$ that are as far apart as possible with respect to the Kendall tau ranking distance.

It is worth  noting that since the input for $\kt{A}{k^{*}}$ is an optimal value for $\lop{A}$, the computation of $\kappa(A)$ is an NP-hard problem. 
This is not to say that the complexity of the program $\kt{A}{k^{*}}$ is necessarily NP-hard.
Indeed, if we assume that the value of $k^{*}$ is provided via an oracle machine, then the NP-hardness of $\kt{A}{k^{*}}$ is not immediately clear since there is no obvious way to apply the standard reductions argument.
\begin{example}\label{ex:kt_college_rankings}
Using the \emph{count} method in the \emph{SCIP} optimization suite~\cite{SCIP}, we enumerate all optima for the college ranking LOP from Example~\ref{ex:college-rankings}, as shown below:

\begin{minipage}{0.45\textwidth}
\begin{align*}
\sigma_{1} &= (10,7,8,5,1,6,9,2,4,3), \\
\sigma_{2} &= (10,7,5,8,1,6,9,2,4,3), \\
\sigma_{3} &= (10,7,5,1,8,6,9,2,4,3), \\
\end{align*}
\end{minipage}\hfill
\begin{minipage}{0.45\textwidth}
\begin{align*}
\sigma_{4} &= (10,7,8,5,1,6,9,2,3,4), \\
\sigma_{5} &= (10,7,5,8,1,6,9,2,3,4), \\
\sigma_{6} &= (10,7,5,1,8,6,9,2,3,4). \\
\end{align*}
\end{minipage}

Note that the six optima only vary in their relative rankings of Amherst, Carleton, Claremont, Haveford, and Swarthmore; thus, the relative rankings of all other colleges: Bowdoin, Middlebury, Ponoma, Wellsely, and Williams are firmly decided.

In Example~\ref{ex:college-rankings}, we saw that the degree of linearity $\lambda(A)$ is approximately equal to $0.75$, i.e., about $75\%$ of the data aligns with any of the optimal rankings shown above. 
Moreover, using \emph{CPLEX}, we can solve $\kt{A}{k^{*}}$ to find $\kappa(A)=3$, i.e., there is a maximum of three discordant pairs between any two optimal rankings of $\lop{A}$, which is readily verified by observing the optimal rankings shown above.
Note that $\lambda(A)$ and $\kappa(A)$ can be used to analyze the rankability of the college rankings data.
Specifically, we know that approximately $75\%$ of our data agrees with any optimal ranking of $\lop{A}$ and that any two optimal rankings can have at most $3$ discordant pairs, i.e., any two optimal rankings will agree with at least $42/45\approx 93\%$ of the relative college rankings. 
It is important to note that this information can be obtained solely from $\lambda(A)$ and $\kappa(A)$, and we do not require all optimal solutions which is impractical in many applications. 
\end{example}
\subsubsection{Optimally Valid Inequalities}
The general solution process of binary programs in well-established solvers like \emph{CPLEX}, \emph{GUROBI}~\cite{gurobi}, or 
\emph{SCIP}, typically involves a partial enumeration scheme known as the branch-and-bound approach. 
Note that, given $n$ binary decision variables, a total enumeration tree would have $2^{n+1}-1$ nodes, where each node corresponds to the given binary program with some of its decision variables fixed to either zero or one. 
Since this is exponential in the size of the input, straight enumeration is not useful in practice. 
In the branch-and-bound approach, the linear programming relaxation is used to prune off branches of the tree to make the search more efficient in practice.

Sometimes additional information that is known about the optimal solutions of the binary program can be expressed as a linear inequality. 
If this linear inequality is valid only for the optimal solutions of the considered binary program, then we say that it is an \emph{optimally valid inequality}. 
Adding such inequalities to the original binary program may significantly change the underlying polytope as it cuts off all non-optimal solutions for which the inequality is not valid. 
This in turn may lead to a smaller branch-and-bound tree, and a faster solution process in practice.

Propositions~\ref{prop:opt_ineq1}--\ref{prop:opt_ineq2} show two optimally valid inequalities for $\kt{A}{k^{*}}$.
We have not verified the impact of these optimally valid inequalities from a computational perspective, but believe they are worth mentioning and investigating in this way. 
For small $n$, these inequalities can be explicitly added to the program $\kt{A}{k^{*}}$.
Otherwise, more advanced techniques such as row generation can be used~\cite{Grotschel1984}.
\begin{proposition}\label{prop:opt_ineq1}
For each $i,j\in[n]$ such that $i\neq j$, the inequality
\begin{equation}\label{eq:opt_ineq1}
z_{ij}+z_{ji}\leq 1
\end{equation}
is optimally valid for $\kt{A}{k^{*}}$.
\end{proposition}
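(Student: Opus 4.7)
The plan is to reduce the claim directly to Proposition~\ref{prop:kt-opt} together with the tournament constraint~\eqref{eq:kt-constx1}. The key observation is that Proposition~\ref{prop:kt-opt} fully characterizes when a $z$-variable can take the value $1$ at an optimum, namely exactly when both corresponding $x$- and $y$-variables are $1$. Applying this characterization to both $z_{ij}$ and $z_{ji}$ will force $x_{ij}=x_{ji}=1$, which is forbidden by the feasibility of the $x$-vector as a tournament.

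Concretely, I would argue by contradiction. Suppose an optimal solution of $\kt{A}{k^{*}}$ satisfies $z_{ij}+z_{ji}\geq 2$; since $z_{ij},z_{ji}\in\{0,1\}$, this means $z_{ij}=z_{ji}=1$. By Proposition~\ref{prop:kt-opt}, $z_{ij}=1$ at optimality forces $x_{ij}=y_{ij}=1$, and similarly $z_{ji}=1$ forces $x_{ji}=y_{ji}=1$. In particular, $x_{ij}+x_{ji}=2$, which violates constraint~\eqref{eq:kt-constx1} (the equality $x_{ij}+x_{ji}=1$ that ensures the $x$-vector encodes a tournament). This contradicts feasibility, let alone optimality, so every optimal solution must satisfy $z_{ij}+z_{ji}\leq 1$, which is exactly the statement that~\eqref{eq:opt_ineq1} is optimally valid.

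I do not anticipate a real obstacle here: the work was done in Proposition~\ref{prop:kt-opt}, and the present claim is essentially a one-line corollary combining that characterization with the tournament constraint. The only thing worth being careful about is distinguishing \emph{optimally valid} from \emph{valid}: the inequality need not hold for arbitrary feasible solutions (one could in principle have $z_{ij}=z_{ji}=1$ with the linking constraint~\eqref{eq:kt-const} still satisfied), but Proposition~\ref{prop:kt-opt} rules this out at any minimizer of the objective~\eqref{eq:kt-obj}, which is all that is required.
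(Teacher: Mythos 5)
Your proof is correct and follows essentially the same route as the paper: invoke Proposition~\ref{prop:kt-opt} to force $x_{ij}=y_{ij}=1$ and $x_{ji}=y_{ji}=1$, then contradict the tournament equality $x_{ij}+x_{ji}=1$. Your closing remark that non-optimal feasible solutions may violate the inequality also mirrors the paper's second paragraph, which is what makes the inequality \emph{optimally} valid rather than merely valid.
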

\begin{proof}
By Proposition~\ref{prop:kt-opt}, an optimal solution of $\kt{A}{k^{*}}$ has $z_{ij}=1$ for some $i,j\in[n]$ such that $i\neq j$, if and only if $x_{ij}=y_{ij}=1$. 
Since $x$ and $y$ correspond to optimal solutions of $\lop{A}$, it follows from constraint~\eqref{eq:lop-const1} that $z_{ij}$ and $z_{ji}$ cannot both equal $1$, which implies that~\eqref{eq:opt_ineq1} holds for all optimal solutions of $\kt{A}{k^{*}}$.

Moreover, note that feasible solutions of $\kt{A}{k^{*}}$ that are not optimal can have both $z_{ij}$ and $z_{ji}$ equal to $1$ since the objective function in~\eqref{eq:kt-obj} is not minimized. 
Thus, for feasible solutions of $\kt{A}{k^{*}}$ that are not optimal,~\eqref{eq:opt_ineq1} does not hold for all $i,j\in[n]$ such that $i\neq j$.
\end{proof}
\begin{proposition}\label{prop:opt_ineq2}
For each $i,j,k\in[n]$ such that $i<j$,  $i<k$, and $j\neq k$, the inequality
\begin{equation}\label{eq:opt_ineq2}
z_{ij} + z_{jk} + z_{ki} \leq 2
\end{equation}
is optimally valid for $\kt{A}{k^{*}}$.
\end{proposition}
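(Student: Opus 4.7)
The plan is to mirror the structure of the proof of Proposition~\ref{prop:opt_ineq1}, exploiting the fact that the inequality~\eqref{eq:opt_ineq2} has exactly the same indexing condition $i<j$, $i<k$, $j\neq k$ as the $3$-dicycle constraints~\eqref{eq:kt-constx2} and~\eqref{eq:kt-consty2} already built into $\kt{A}{k^{*}}$. The key idea is that Proposition~\ref{prop:kt-opt} allows one to transfer any statement about the $z$ variables at an optimal solution into a corresponding statement about the $x$ and $y$ variables, after which the LOP constraints do the work.

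First I would argue, by contradiction, that no optimal solution can have $z_{ij}+z_{jk}+z_{ki}=3$. Suppose otherwise, so $z_{ij}=z_{jk}=z_{ki}=1$ for some triple with $i<j$, $i<k$, $j\neq k$. By Proposition~\ref{prop:kt-opt} each of these equalities forces $x_{ij}=y_{ij}=1$, $x_{jk}=y_{jk}=1$, and $x_{ki}=y_{ki}=1$. In particular $x_{ij}+x_{jk}+x_{ki}=3$, which directly violates constraint~\eqref{eq:kt-constx2}. Hence~\eqref{eq:opt_ineq2} holds at every optimal solution of $\kt{A}{k^{*}}$.

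For completeness, and to mirror the exposition of Proposition~\ref{prop:opt_ineq1}, I would then observe that~\eqref{eq:opt_ineq2} is not valid for all feasible solutions of $\kt{A}{k^{*}}$. Constraint~\eqref{eq:kt-const} only imposes the lower bound $z_{ij}\geq x_{ij}+y_{ij}-1$, so at a non-optimal feasible point one may freely inflate the $z$ variables to $1$. Concretely, taking any two optimal rankings of $\lop{A}$ for the $x$ and $y$ blocks and setting $z_{ij}=z_{jk}=z_{ki}=1$ yields a feasible solution that violates~\eqref{eq:opt_ineq2}; this is precisely what makes the inequality \emph{optimally} valid rather than valid in general.

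I do not expect a genuine obstacle here, as the argument is essentially immediate once Proposition~\ref{prop:kt-opt} is available. The only point requiring minor care is matching the index ranges of~\eqref{eq:opt_ineq2} with those of~\eqref{eq:kt-constx2}, which is exactly why the proposition restricts to $i<j$, $i<k$, $j\neq k$ rather than to three arbitrary distinct indices; without this restriction, the desired appeal to the $3$-dicycle constraint on $x$ would not go through directly.
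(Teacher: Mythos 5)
Your proposal is correct and follows essentially the same route as the paper: it uses Proposition~\ref{prop:kt-opt} to transfer the $z$ variables to the $x$ (and $y$) variables and then invokes the 3-dicycle constraint, before noting that non-optimal feasible solutions can inflate the $z$ variables to violate~\eqref{eq:opt_ineq2}. The only difference is cosmetic: you phrase the first step as a contradiction, while the paper argues directly.
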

\begin{proof}
By Proposition~\ref{prop:kt-opt}, an optimal solution of $\kt{A}{k^{*}}$ has $z_{ij}=1$ for some $i,j\in[n]$ such that $i\neq j$, if and only if $x_{ij}=y_{ij}=1$.
Since $x$ and $y$ correspond to optimal solutions of $\lop{A}$, it follows from constraint~\eqref{eq:lop-const2} that
\[
z_{ij} + z_{jk} + z_{ki} \leq 2,
\]
which implies that~\eqref{eq:opt_ineq2} holds for all optimal solutions of $\kt{A}{k^{*}}$. 

Moreover, note that feasible solutions of $\kt{A}{k^{*}}$ that are not optimal can have all $z_{ij}$, $z_{jk}$, and $z_{ki}$ equal to $1$ since the objective function in~\eqref{eq:kt-obj} is not minimized.
Thus, for feasible solutions of $\kt{A}{k^{*}}$ that are not optimal,~\eqref{eq:opt_ineq2} does not hold for all $i,j,k\in[n]$ such that $i<j$, $i<k$ and $j\neq k$. 
\end{proof}
\section{Rankability in the NFL}\label{sec:rank-nfl}
Rankings are often used in sports to determine a linear ordering of the teams. 
For instance, the BCS College Football Rankings is used to determine which teams get to play in the post-season.
These rankings are created by human polls and an aggregation of six computer rankings~\cite{Langville2012}.
In the NFL, the playoff teams are determined by division and win-loss records; however, rankings are still important in the betting world~\cite{Langville2012}.
In this section, we use the LOP to analyze the rankability of NFL game data, which was accessed from Kaggle~\cite{spreadspoke}.
In particular, we show how the degree of linearity and the maximal Kendall tau distance can be used to analyze the hindsight and foresight accuracy of rankings, which are two common strategies for comparing the relevance of sport rankings~\cite{Langville2012}.

\subsection{Hindsight Accuracy}
While rankings can be chosen at random, they are usually formed from game data.
For instance, the Massey method uses point-score data and the Colley method uses win-loss data to rate teams~\cite{Langville2012}. 
The \emph{hindsight accuracy} of a ranking is the percentage of games, from the data used to form that ranking, for which it correctly predicts the outcome.
For instance, suppose you obtain a ranking $\sigma\in S_{n}$ of $n\geq 2$ teams from the regular season game data for a particular year.
Then, the hindsight accuracy of $\sigma$ is the percentage of games during the regular season for which the higher ranked team in $\sigma$ won the game. 

Now, define $A=[a_{ij}]$, where $a_{ij}$ is the number of games team $i$ won against team $j$ and $0.5$ is awarded to both teams in the case of a tie. 
Then, the following result provides a non-trivial bound on the hindsight accuracy of any ranking $\sigma\in S_{n}$.
\begin{proposition}\label{prop:hind_acc}
The degree of linearity of $A$ is an upper bound on the hindsight accuracy of any ranking in $S_{n}$. 
Moreover, the rankings in $S_{n}$ that have maximum hindsight accuracy correspond to the optimal rankings of $\lop{A}$.
\end{proposition}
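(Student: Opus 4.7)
The plan is to show that the hindsight accuracy of any $\sigma \in S_n$ is \emph{exactly} the ratio whose maximum defines $\lambda(A)$. Once this identification is made, both statements of the proposition follow immediately: the upper bound is the definition of the maximum, and the characterization of the maximizers is the final sentence of the paragraph defining $\lambda(A)$, together with the fact already noted in Section~\ref{subsec:deg-lin} that the maximum is attained precisely at optimal rankings of $\lop{A}$.

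First, I would unpack the denominator. Since $a_{ij}$ counts the games that team $i$ won against team $j$ (with ties splitting $0.5/0.5$), the total number of games played between an unordered pair $\{i,j\}$ equals $a_{ij}+a_{ji}$. Summing over unordered pairs gives
\[
\sum_{i<j}(a_{ij}+a_{ji}) \;=\; \sum_{i\neq j} a_{ij},
\]
so $\sum_{i\neq j} a_{ij}$ is exactly the total number of games in the dataset, i.e., the denominator of the hindsight accuracy.

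Next, I would unpack the numerator. Fix any $\sigma \in S_n$. A game between teams $i$ and $j$ is correctly predicted by $\sigma$ when the team ranked higher in $\sigma$ (the one with the smaller value of $\sigma(\cdot)$) also won the game; a tie, under the $0.5/0.5$ convention, contributes half credit regardless of the relative $\sigma$-ranking. In both cases the number of games won by team $i$ against team $j$ that are correctly predicted by $\sigma$ is $a_{ij}$ whenever $\sigma(i)<\sigma(j)$. Hence the total number of correctly predicted games is
\[
\sum_{\sigma(i)<\sigma(j)} a_{ij},
\]
which is the numerator in the definition of $\lambda(A)$. Therefore the hindsight accuracy of $\sigma$ equals
\[
\frac{\sum_{\sigma(i)<\sigma(j)} a_{ij}}{\sum_{i\neq j} a_{ij}}.
\]

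Finally, I would close the argument by invoking the definition of $\lambda(A)$: the above ratio is at most $\lambda(A)$ for every $\sigma \in S_n$, with equality if and only if $\sigma$ attains the maximum in the definition of $\lambda(A)$. Since Section~\ref{subsec:deg-lin} already observes that this maximum is attained precisely at the optimal rankings of $\lop{A}$, both claims of the proposition follow. The only real obstacle is bookkeeping around the tie convention, and the key is that the $0.5/0.5$ split makes the same expression $a_{ij}$ represent both ``games won'' and ``half-credit predictions,'' which is exactly what keeps the numerator and denominator of the hindsight accuracy aligned with the ratio defining $\lambda(A)$.
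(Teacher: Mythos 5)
Your identification of the hindsight accuracy with the ratio defining $\lambda(A)$ is not correct under the paper's definition, and this is exactly the point the word ``upper bound'' in the proposition is designed to handle. Hindsight accuracy is the percentage of games for which the higher ranked team in $\sigma$ \emph{won}; a tied game is therefore never correctly predicted, by any ranking. The $0.5/0.5$ convention enters only in the construction of the matrix $A$, not in the definition of hindsight accuracy, so you are not free to award ``half credit'' for ties in the numerator of that accuracy. Writing $t$ for the number of tied games, $m$ for the total number of games, and $c(\sigma)$ for the number of games correctly predicted by $\sigma$, the upper-triangular sum of the symmetric re-ordering of $A$ by $\sigma$ is $c(\sigma)+0.5t$, so the ratio you compute equals $\bigl(c(\sigma)+0.5t\bigr)/m$, which strictly exceeds the hindsight accuracy $c(\sigma)/m$ whenever $t>0$. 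Example~\ref{ex:nfl_hindsight} makes this discrepancy explicit: the optimal ranking's hindsight accuracy falls short of $\lambda(A)$ precisely because of tied games, so your claimed equality cannot hold in general.

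The conclusions you want are still reachable, but the argument has to go through this decomposition, which is what the paper's proof does: since $\lambda(A)=\max_{\sigma}\bigl(c(\sigma)+0.5t\bigr)/m\geq\max_{\sigma}c(\sigma)/m$, the degree of linearity bounds the hindsight accuracy of every ranking; and since $0.5t$ and $m$ do not depend on $\sigma$, maximizing $c(\sigma)$ is equivalent to maximizing the upper-triangular sum, so the rankings of maximum hindsight accuracy are exactly the optimal rankings of $\lop{A}$. Your write-up lands on the right endpoints only because the term you silently added, $0.5t/m$, happens to be constant in $\sigma$; as written, the central step --- ``the hindsight accuracy of $\sigma$ equals the ratio'' --- is false whenever ties occur, and the proof must state the resulting inequality (and the constant-offset observation for the second claim) rather than assert equality.
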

\begin{proof}
Let $t$ denote the number of games that ended in a tie and define $c(\sigma)$ to be the number of games correctly predicted by $\sigma\in S_{n}$.
For each $\sigma\in S_{n}$, the corresponding symmetric re-ordering of $A$ has an upper-triangular sum equal to $c(\sigma) + 0.5t$.
Thus, the optimal value of $\lop{A}$ satisfies
\[
k^{*} = \max_{\sigma\in S_{n}}c(\sigma) + 0.5 t.
\]
Let $m$ denote the total number of games played, then the degree of linearity of $A$ satisfies
\[
\lambda(A) = \frac{k^{*}}{\sum_{i\neq j}a_{ij}} = \max_{\sigma\in S_{n}}\frac{c(\sigma) + 0.5t}{m} \geq \max_{\sigma\in S_{n}}\frac{c(\sigma)}{m},
\]
and the result follows.
\end{proof}
\begin{example}\label{ex:nfl_hindsight}
From 1970 to 2019, the matrix $A$ is formed using the NFL regular season game data. 
Then, using the \emph{CPLEX} optimization suite, we compute the optimal value and an optimal ranking of $\lop{A}$. 
In Figure~\ref{fig:nfl_hindsight}, we report the degree of linearity of $A$ and the hindsight accuracy of the given optimal ranking of $\lop{A}$.
Moreover, for comparison, we include the hindsight accuracy of the Massey and Colley rankings. 
\begin{figure}[ht]
\centering
\includegraphics[width=1.0\textwidth]{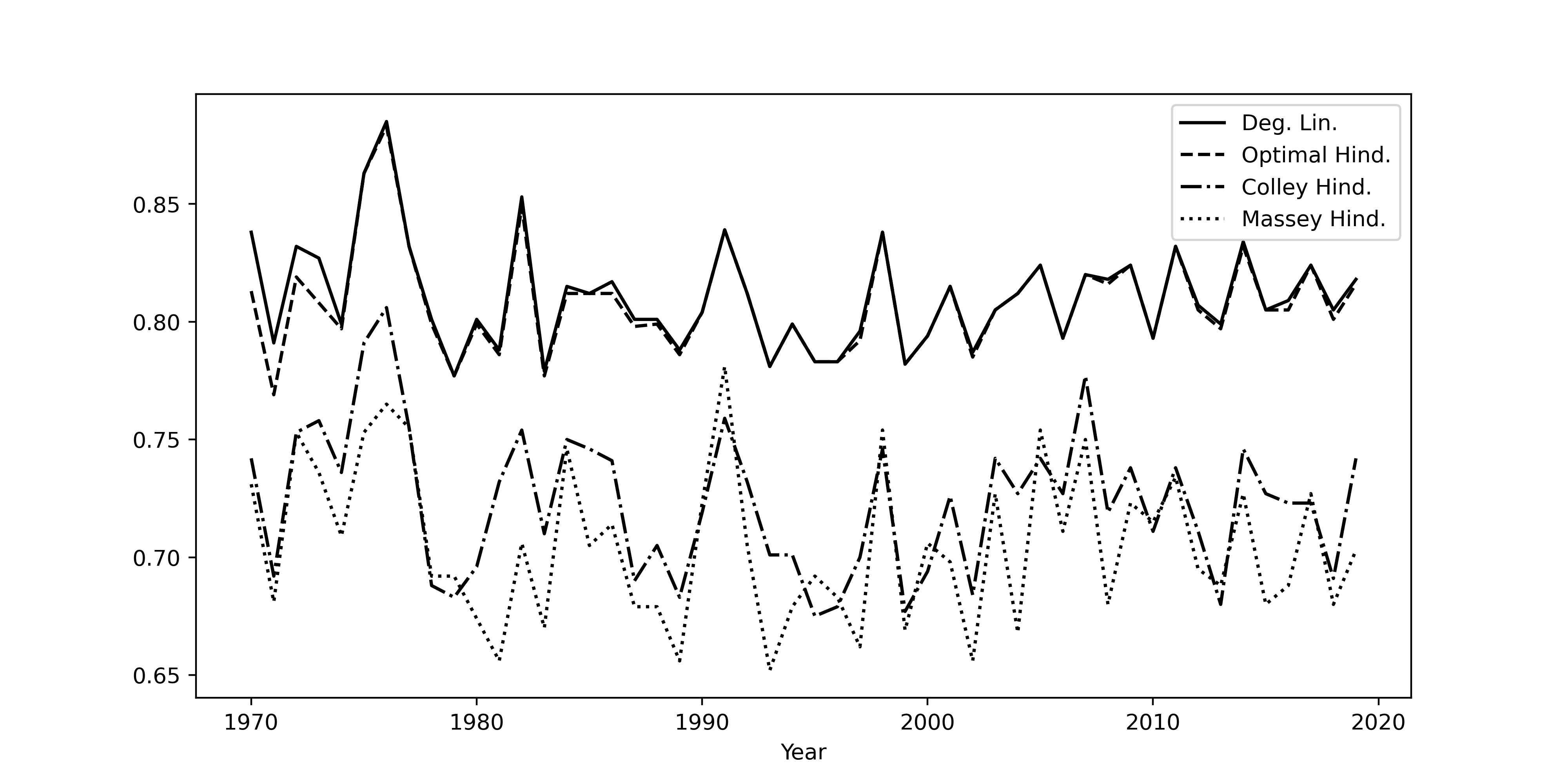}
\caption{The degree of linearity and the hindsight accuracy of NFL rankings.}
\label{fig:nfl_hindsight}
\end{figure}

As expected from Proposition~\ref{prop:hind_acc}, the degree of linearity serves as an upper bound for the hindsight accuracy of all rankings. 
Moreover, the hindsight accuracy of the optimal ranking is nearly identical to the degree of linearity, the only discrepancy is the result of games that ended in a tie. 
Finally, it is worth noting that the hindsight accuracy of the Massey and Colley rankings also correlate strongly with the degree of linearity; consider the correlation matrix in Figure~\ref{fig:nfl_hindsight_corr}.
\begin{figure}[ht]
\centering
\includegraphics[width=0.55\textwidth]{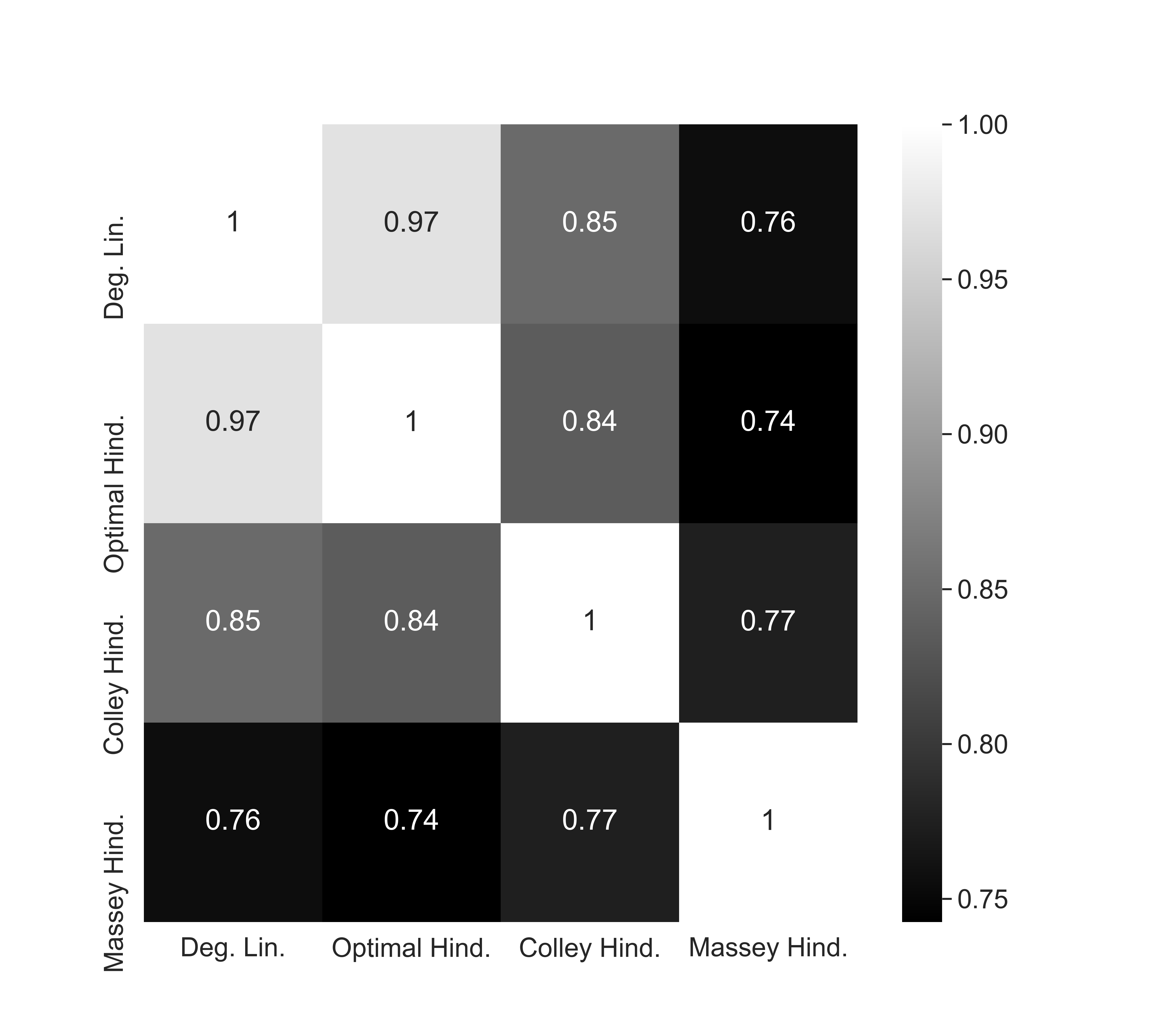}
\caption{Correlation between the degree of linearity and the hindsight accuracy of NFL rankings.}
\label{fig:nfl_hindsight_corr}
\end{figure}
\end{example}
\subsection{Foresight Accuracy}
The \emph{foresight accuracy} of a ranking is the percentage of future games that ranking can predict.
For instance, suppose $\sigma\in S_{n}$ is a ranking of $n\geq 2$ teams formed from the regular season game data for a particular year. 
The foresight accuracy of $\sigma$ would be the percentage of playoff games for which the higher ranked team in $\sigma$ won the game.
In general, there is no relationship between the hindsight accuracy and foresight accuracy of a ranking.
As an example, note that the correlation between the hindsight and foresight accuracy of the Colley and Massey rankings from 1970 to 2019 in the NFL is $0.0451$ and $-0.1368$, respectively.
This lack of correlation can be partially explained by the structure of the NFL playoffs, that is, only certain teams make it to the playoffs, and not all of those teams play each other as is done in round-robin style tournaments.
\begin{example}\label{ex:forw_pred_diff}
From 1970 to 2019, the matrix $A$ from Proposition~\ref{prop:hind_acc} is formed using the regular season game data. 
Note that any two optimal rankings of $\lop{A}$ have the same hindsight accuracy,  yet they can have vastly different foresight accuracy. 
In Figure~\ref{fig:forw_pred_diff}, we show the absolute difference between the foresight accuracy of two optimal rankings of $\lop{A}$.
In particular, the two optimal rankings are computed via $\kt{A}{k^{*}}$ to be as far apart as possible with respect to the Kendall tau ranking distance. 
Note that these two rankings are not necessarily the most diverse among the playoff teams; hence, they may not maximize the absolute difference between the foresight accuracy of two optimal rankings of $\lop{A}$.
\begin{figure}[ht]
\centering
\includegraphics[width=0.75\textwidth]{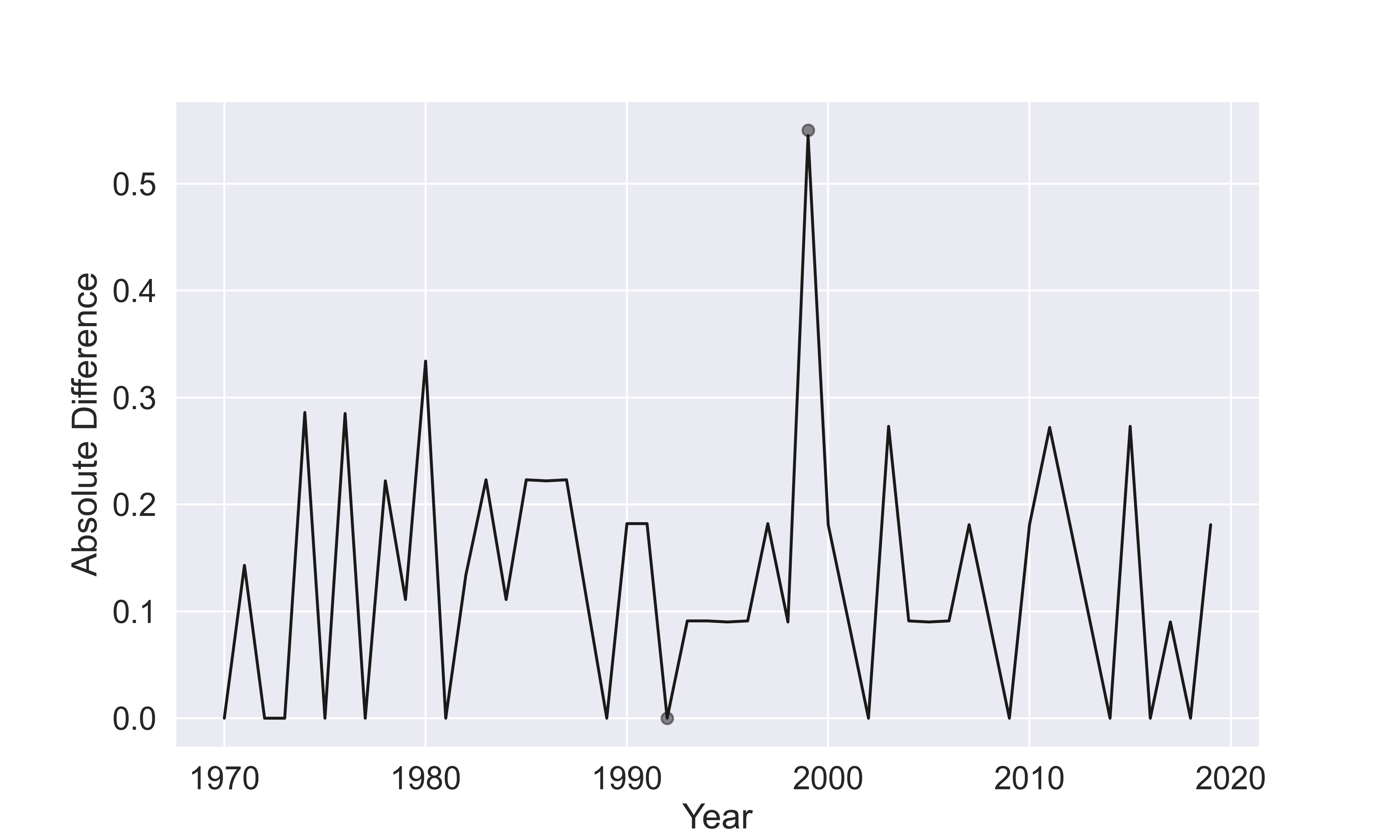}
\caption{The absolute difference between the foresight accuracy of two optimal rankings.}
\label{fig:forw_pred_diff}
\end{figure}
\end{example}

Nonetheless, Figure~\ref{fig:forw_pred_diff} still leads to an interesting analysis, especially if we consider the two extreme points: $(1992,0)$ and $(1999,0.55)$.
In Figure~\ref{fig:nfl_opt_rankings}, the playoff teams in two optimal NFL rankings computed via $\kt{A}{k^{*}}$ for the years 1992 and 1999 are shown.
The diversity between these rankings is displayed using dashed lines to illustrate how the ranking of an individual team changes.
\begin{figure}[ht]
\centering
\begin{tabular}{cc}
\includegraphics[width=0.40\textwidth]{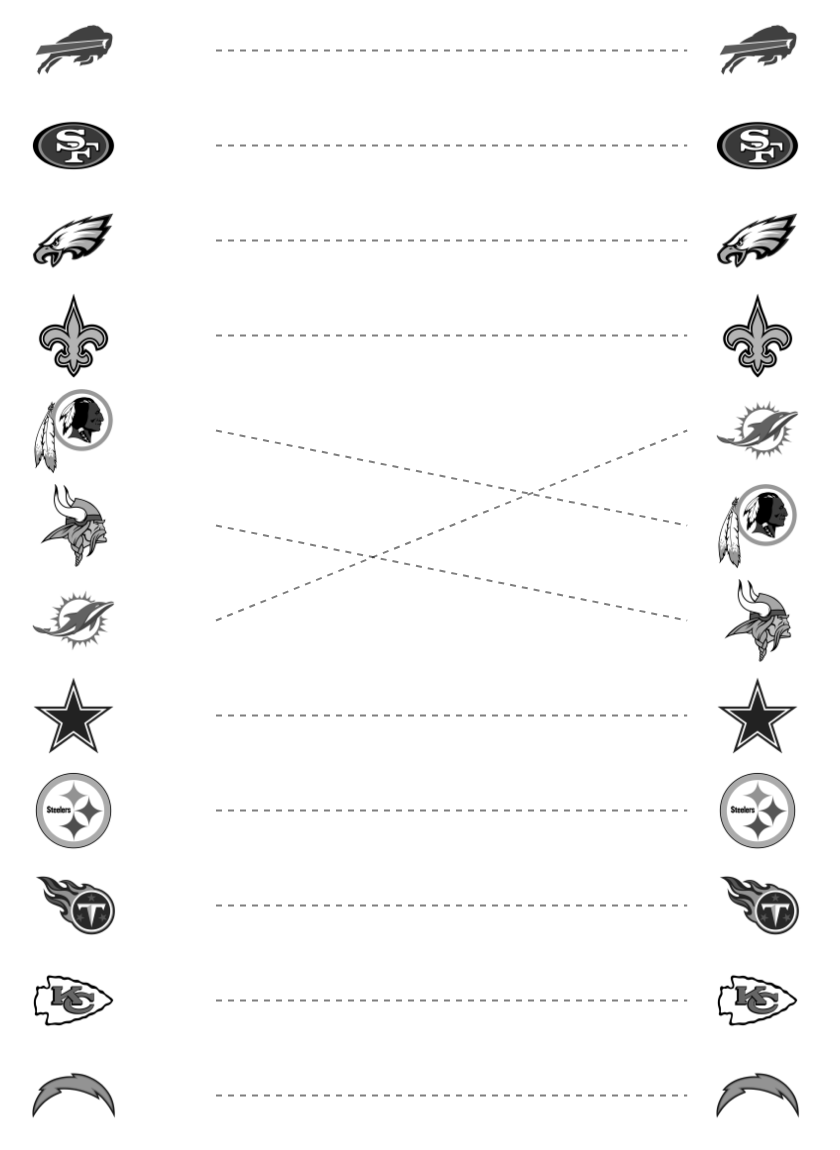}
&
\includegraphics[width=0.40\textwidth]{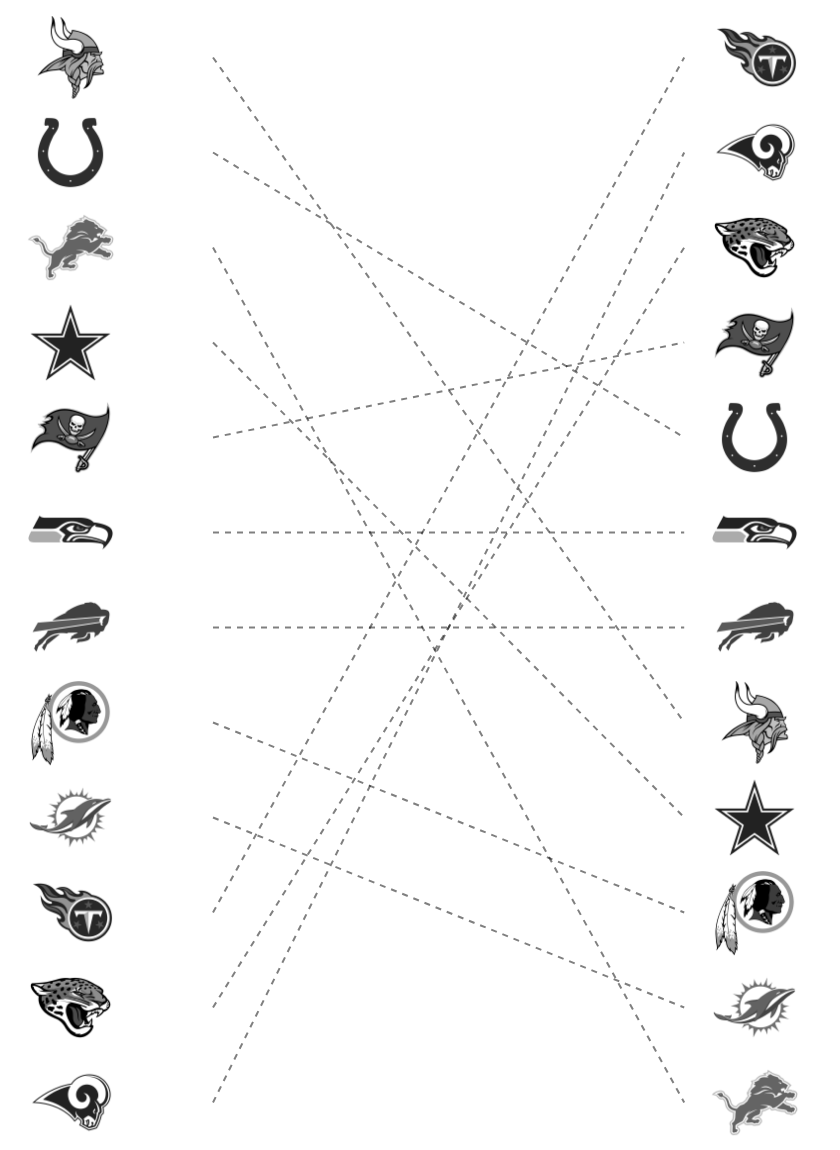} \\
\textbf{1992} & \textbf{1999} \\
\end{tabular}
\caption{Diversity among NFL playoff teams in two optimal rankings for years 1992 and 1999.}
\label{fig:nfl_opt_rankings}
\end{figure}
\begin{example}
In 1992, the Kendall Tau ranking distance among the playoff teams is equal to $2$, which corresponds to the relative changes in the rankings among the Redskins, Vikings, and Dolphins.
Moreover, the only playoff game among these three teams is between the Vikings and Redskins, see Figure~\ref{fig:nfl_playoff1992}.
Since both optimal rankings in 1992 have the Redskins ranked higher than the Vikings, they both correctly predict the outcome of that game.
Moreover, since all other games in the playoff progression are between teams whose relative ranking is the same in both rankings in 1992, it follows that these rankings have the same prediction for each game.
\begin{figure}[ht]
\centering
\includegraphics[trim={0 6cm 0 0},clip,width=1.0\textwidth]{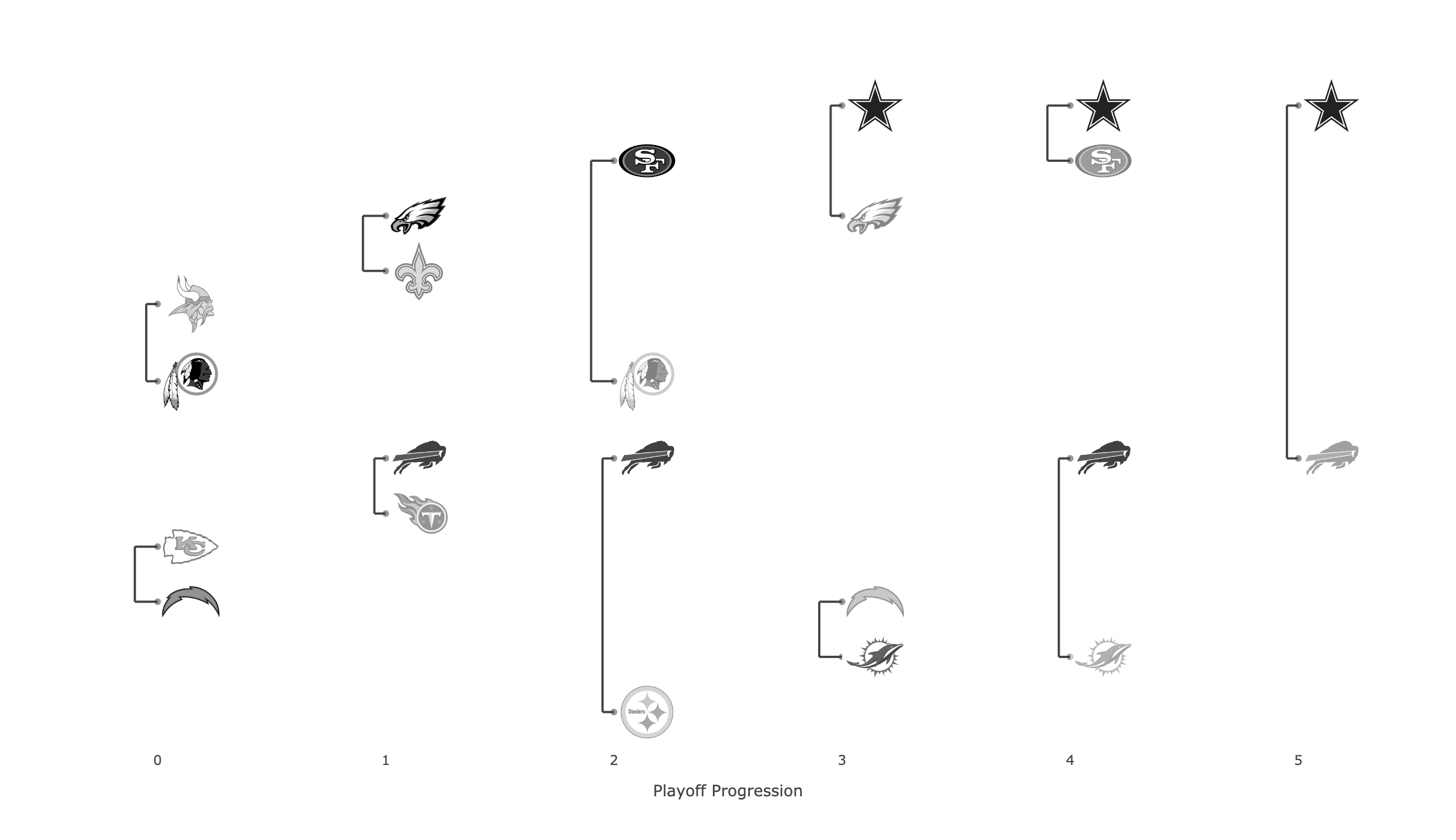}
\caption{NFL playoff progression in 1992; the opaque team lost the indicated matchup.}
\label{fig:nfl_playoff1992}
\end{figure}

It is worth emphasizing that both optimal rankings from 1992 have a hindsight accuracy of $81\%$ and a foresight accuracy of $64\%$. 
Also, these optimal rankings did not correctly predict any of the three games that the Dallas Cowboys played in.
Interestingly,  the betting odds favored the Cowboys in two of those three games, including the Super Bowl game where the Cowboys were favored to win over the Bills with a point-spread of $-6.5$; moreover, the Cowboys covered the spread winning $52$ to $17$. 
It seems that the betting experts were able to see something that these optimal rankings based solely on win-loss data did not see.
\end{example}

\begin{example}
In 1999, there is much more diversity in the relative rankings of the playoff teams.
For instance, the Super Bowl winner: St. Louis Rams, can be considered the 2nd best team in the right-sided optimal ranking and the worst team in the left-sided optimal ranking as shown in Figure~\ref{fig:nfl_opt_rankings}.
It seems that the right-sided ranking is more inline with how the betting experts viewed these teams since the Rams were favored in all three of their games, and they covered the spread in two of those three games.
Moreover, the right-sided ranking correctly predicts the outcome of $82\%$ of the playoff games, whereas the left-sided optimal ranking only correctly predicts the outcome of $27\%$ of the playoff games, see Figure~\ref{fig:nfl_playoff1999}.
\begin{figure}[ht]
\centering
\includegraphics[trim={0 6cm 0 0},clip,width=1.0\textwidth]{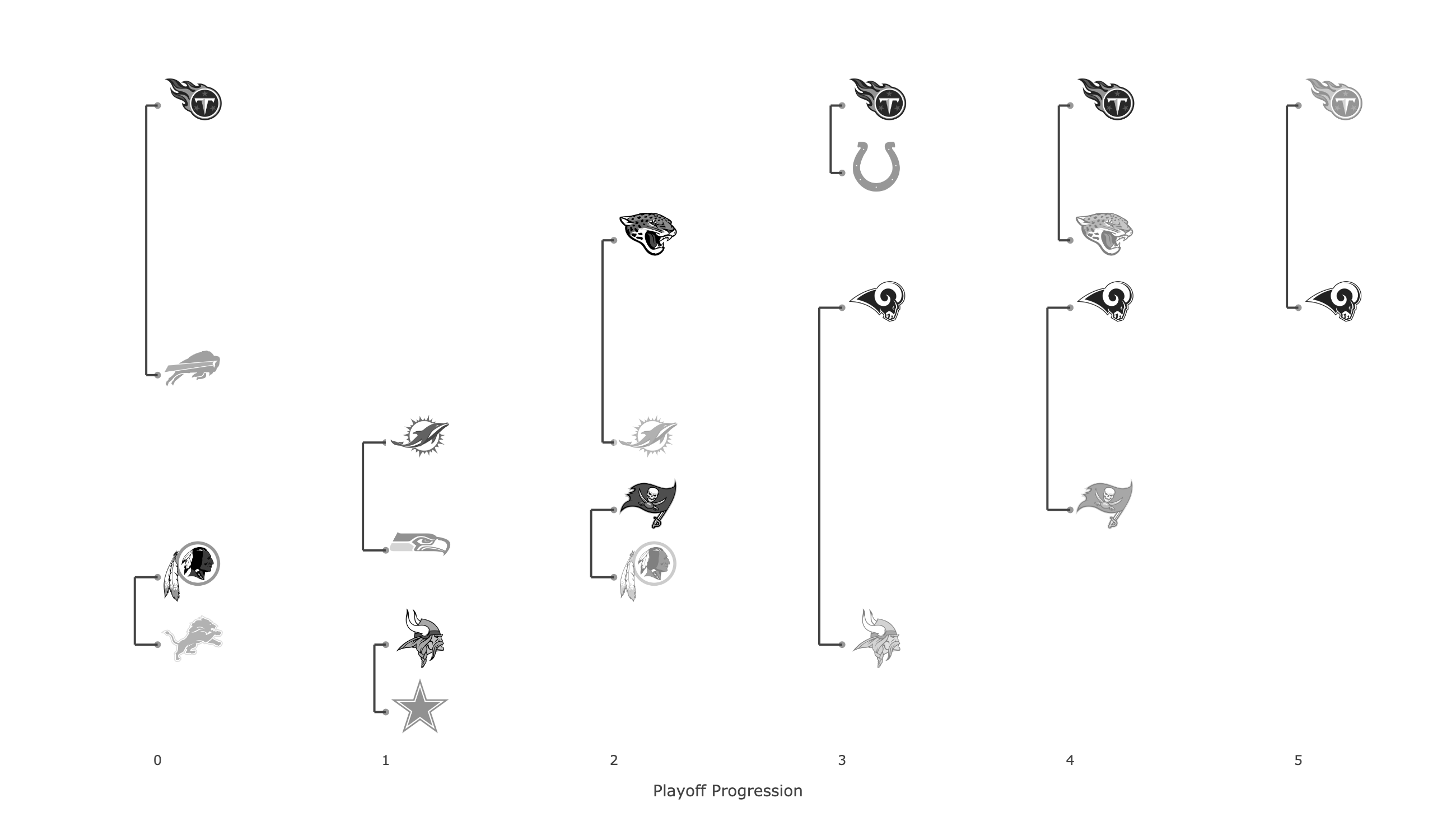}
\caption{NFL playoff progression in 1999; the opaque team lost the indicated matchup.}
\label{fig:nfl_playoff1999}
\end{figure} 
\end{example}
\section{Conclusion}\label{sec:conclusion}
The linear ordering problem (LOP) provides novel insight into the rankability of a dataset.
In particular, the degree of linearity can be used as a non-trivial upper bound on the hindsight accuracy of all rankings. 
In Figure~\ref{fig:nfl_hindsight}, we saw that the hindsight accuracy of the Massey and Colley NFL rankings were usually around $5\%$ to $10\%$ below the degree of linearity. 

Also, the binary program $\kt{A}{k^{*}}$ can be used to measure the maximal Kendall tau ranking distance between any two optimal rankings of $\lop{A}$. 
In Figures~\ref{fig:forw_pred_diff} and~\ref{fig:nfl_opt_rankings}, we saw that this binary program can be used to identify optimal rankings that have vastly different foresight accuracy.
As noted in Example~\ref{ex:forw_pred_diff}, this binary program does not necessarily maximize the diversity among playoff teams. 
For this reason, as future research, it would be worthwhile to consider adaptations to $\kt{A}{k^{*}}$ that seek to maximize diversity among playoff teams.
It may even be worth considering maximizing the diversity among playoff teams that are from the same conference, thus optimizing the likelihood of those teams playing each other in the playoffs.
Of course, this motivation can be generalized to adapting $\kt{A}{k^{*}}$ to optimize the diversity in the optimal rankings of $\lop{A}$ among a subset of vertices. 

We conclude by noting that the NFL data in Section~\ref{sec:rank-nfl}, required us to solve instances of $\lop{A}$ and $\kt{A}{k^{*}}$ to optimality for $n\geq 30$. 
Also, instances of the program from~\cite{Kondo2014}, which is similar to $\kt{A}{k^{*}}$, have been solved to optimality for $n\geq 100$.
While the complexity of $\lop{A}$ and $\kt{A}{k^{*}}$ suggest computational limitations, more advanced techniques such as column and row generation combined with heuristics will likely lead to improved scalability of our work~\cite{Barnhart1998,Chanas1996,Grotschel1984}.
For future research, we are interested in pursuing questions that arise from the computational challenges of measuring rankability for larger data sets.
\section*{Acknowledgments} 
The authors wish to acknowledge three anonymous referees whose thoughtful comments and suggestions greatly improved this article.

\providecommand{\href}[2]{#2}
\providecommand{\arxiv}[1]{\href{http://arxiv.org/abs/#1}{arXiv:#1}}
\providecommand{\url}[1]{\texttt{#1}}
\providecommand{\urlprefix}{URL }

\medskip
\medskip

\end{document}